\newcommand{\Length}{\mathrm{length}}
\newcommand{\reg}{\mathrm{reg}}
\title[Dimensions of very well intrinsically approximable sets]{Hausdorff dimensions of very well intrinsically approximable subsets of quadratic hypersurfaces}
\date{}
\begin{document}

\authorlior\authorkeith\authordavid

\begin{abstract}

We prove an analogue of a theorem of A. Pollington and S. Velani ('05), furnishing an upper bound on the Hausdorff dimension of certain subsets of the set of very well intrinsically approximable points on a quadratic hypersurface. The proof incorporates the framework of intrinsic approximation on such hypersurfaces first developed in the authors' joint work with D. Kleinbock (preprint '14) with ideas from work of D. Kleinbock, E. Lindenstrauss, and B. Weiss ('04).

\end{abstract}
\maketitle

\section{Introduction}

In its classical form, the field of Diophantine approximation investigates the quality by which an irrational vector $\xx \in \R^d$ can be approximated by rationals $\frac{\pp}{q} \in \Q^d$.\footnote{Throughout the paper, rationals will be written as $\frac{\pp}{q}$, where $\pp$ is a primitive integer vector and $q$ is a positive integer.} One of the most significant results is Dirichlet's theorem, a corollary of which states that for every irrational $\xx$, there exist infinitely many reduced rationals $\frac{\pp}{q}$ satisfying
\[
\left\|\xx - \frac{\pp}{q}\right\| < \frac1{q^{1+1/d}},
\]
where $\left\| \cdot \right\|$ denotes the max norm. Motivated by this result, we recall the notion of the \emph{exponent of irrationality} of $\xx$:
\[
\omega(\xx) := \sup\left\{c > 0: \exists^\infty \frac{\pp}{q} \text{ satisfying } \left\| \xx - \frac{\pp}{q} \right\| < \frac1{q^c}\right\}.
\]
Here and hereafter the notation $\exists^\infty$ stands for ``there exist infinitely many.''

Clearly, Dirichlet's corollary implies that $\omega(\xx) \geq 1 + \frac1{d}$ for all $\xx \in \R^d$. Let
\[
W_c := \{\xx \in \R^d : \omega(\xx) > c\}.
\]
We call a vector $\xx$ \emph{very well approximable}, denoted $\xx \in \VWA$, if $\omega(\xx) > 1 + \frac1{d}$, i.e. if
\[
\xx \in \bigcup_{c > 1 + \frac1{d}} W_c.
\]
It is well known that the set $\VWA$ is a Lebesgue null set of full Hausdorff dimension. More precisely, a result of V. Jarn\'ik states that $\HD(W_c) = \frac{d+1}{c}$ for all $c \geq 1 + \frac1d$. Here and hereafter $\HD$ stands for Hausdorff dimension.

In \cite{FKMS}, D. Kleinbock and the authors developed a theory of \emph{intrinsic approximation on quadratic hypersurfaces} which shares many features with the classical theory. Fix $d\geq 2$, let $P:\R^d\to\R$ be a quadratic polynomial with integral coefficients, and let $Z_P$ denote the zero set of $P$. Then intrinsic approximation on the quadratic hypersurface $Z_P$ is the theory of approximating points $\xx\in Z_P$ by rational points $\frac\pp q\in \Q^d\cap Z_P$.\footnote{In \cite{FKMS}, most of the results were phrased in terms of the projectivization of $Z_P$, which can be described in terms of the light-cone of a quadratic form $Q$ depending on $P$. For the present paper, it is easier to work in the affine setting, for which purpose we can use the Affine Corollaries provided in \cite{FKMS}. The interested reader may verify that the results of this paper can be translated back into the projective setting if desired.} The word ``intrinsic'' refers to the fact that the rational points are required to lie in $Z_P$, rather than just the point $\xx$. If this requirement is omitted, what results is the classical theory of \emph{Diophantine approximation on manifolds}: cf. \cite{Beresnevich_Khinchin, Beresnevich_BA, KleinbockMargulis2}.


One of the main theorems in \cite{FKMS} is a Dirichlet-type theorem for intrinsic approximation:

\begin{theorem}[{\cite[Theorem 8.1, Affine Corollary]{FKMS}} + {\cite[Remark 8.7]{FKMS}}]
\label{theoremFKMS}
Let $P:\R^{d} \to \R$ be a quadratic polynomial with integral coefficients, and let $Z = Z_P$ denote its zero set. Suppose that $\Q^d\cap Z_\reg\neq\emptyset$, where $Z_\reg$ is the set of points at which $Z$ is smooth (i.e. at which $\nabla P\neq \0$). Then for every $\xx \in Z$, there exists a constant $C = C(\xx) > 0$ and infinitely many $\frac{\pp}{q} \in Z \cap \Q^{d}$ satisfying
\[
\left\| \xx - \frac{\pp}{q} \right\| \leq \frac{C}{q}\cdot
\]
\end{theorem}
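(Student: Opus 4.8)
The plan is to reduce the statement, by way of the light-cone picture of \cite{FKMS}, to the distribution of integer points on an isotropic quadratic form in $d+1$ variables, and then to control that distribution via the dynamics of the associated orthogonal group on a finite-volume homogeneous space. Two preliminary reductions simplify matters. First, we may assume $\xx\in Z_\reg$: the singular locus $\{\nabla P=\0\}$ is cut out by integral linear equations, hence, if nonempty, is a rational affine subspace $L\subseteq Z$, and for $\xx\in L$ the conclusion already follows from classical Dirichlet applied inside $L$ (which has exponent $\geq 1$). Second, discarding the irreducible components of $Z$ disjoint from $\Q^d\cap Z_\reg$, we may assume that the homogenization $Q$ of $P$, a quadratic form on $\R^{d+1}$, is nondegenerate.

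The naive ``chord'' construction already settles the case $d=2$. Fix a rational point $\mathbf{r}_0\in\Q^d\cap Z_\reg$ of height $q_0$. A line through $\mathbf{r}_0$ with primitive integer direction $\mathbf{u}$ meets $Z$ again at $\mathbf{r}_0-\frac{\nabla P(\mathbf{r}_0)\cdot\mathbf{u}}{Q_0(\mathbf{u})}\mathbf{u}$, where $Q_0$ is the leading form of $P$, and this point lies in $Z\cap\Q^d$ with denominator $\ll q_0|Q_0(\mathbf{u})|\ll\|\mathbf{u}\|^2$. Choosing $\mathbf{u}$ to approximate the direction of $\xx-\mathbf{r}_0$ makes this point close to $\xx$ at the rate $\ll\|\mathbf{u}\|^{-d/(d-1)}$ with which rational directions approximate a given one in $\mathbb P^{d-1}$ — which is exactly $\|\mathbf{u}\|^{-2}$ when $d=2$ (one-dimensional Dirichlet, exponent $2$). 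Thus for $d=2$ the denominator $\ll\|\mathbf{u}\|^2$ and the error $\ll\|\mathbf{u}\|^{-2}$ are reciprocal, and one obtains infinitely many $\frac{\pp}{q}\in Z\cap\Q^2$ with $\|\xx-\frac{\pp}{q}\|\ll 1/q$. For $d\geq 3$, however, the chord map still squares denominators while the error only decays like $\|\mathbf{u}\|^{-d/(d-1)}$, so the product (denominator)$\times$(error) diverges; routing through low-complexity plane sections fails for the same reason, since the implied constants grow with the complexity of the auxiliary conic. So for $d\geq 3$ one must exploit the arithmetic symmetry of $Z$.

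For the general case I would invoke the framework of \cite{FKMS}: a rational chart identifies $Z=Z_P$ with an open piece of the projectivized light cone $\mathbb P(\mathcal L_Q)$, $\mathcal L_Q=\{Q=0\}\subset\R^{d+1}$, under which rational points $\frac{\pp}{q}\in Z\cap\Q^d$ correspond to primitive integer vectors $\mathbf{v}\in\mathcal L_Q\cap\Z^{d+1}$, the height $q$ being comparable to $\ell(\mathbf{v})$ for a fixed linear functional $\ell$. Since $Q$ is isotropic over $\Q$, the integer points $\Gamma:=\mathrm{SO}_Q(\Z)$ form a non-uniform lattice in $\mathrm{SO}_Q(\R)$ (Borel--Harish-Chandra), and $\mathrm{SO}_Q(\R)$ acts transitively on each connected component of $\mathcal L_Q\setminus\{\0\}$. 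Under this dictionary the theorem becomes: for every $\hat\xx\in\mathcal L_Q$ there are infinitely many $\mathbf{v}\in\mathcal L_Q\cap\Z^{d+1}$ with $\|\mathbf{v}-\ell(\mathbf{v})\hat\xx\|\ll 1$ — equivalently, the integer points of $\mathcal L_Q$, rescaled to the hyperplane $\ell=1$, approach $\hat\xx$ at rate $\ll 1/\ell(\mathbf{v})$. I would deduce this from the mechanism underlying the Duke--Rudnick--Sarnak/Eskin--McMullen count of integer points on quadrics: choose a one-parameter subgroup $(a_t)\subset\mathrm{SO}_Q(\R)$ that is hyperbolic with $\hat\xx$ an eigenvector (coming from a hyperbolic plane through $\hat\xx$), with expanding horospherical subgroup $N\cong\R^{d-1}$; then the pieces $\{a_t n : n\in N,\ \|n\|\ll 1\}$ have volume $\asymp e^{(d-1)t}$, and because $\Gamma\backslash\mathrm{SO}_Q(\R)$ has finite volume, their equidistribution (equivalently, the fact that $\#\{\mathbf{v}\in\mathcal L_Q\cap\Z^{d+1}:\|\mathbf{v}\|\leq T\}$ is of order $T^{d-1}$, matching the $(d-1)$-volume of $\{\|\mathbf{v}\|\asymp T\}\cap\mathcal L_Q$) forces $\mathcal L_Q\cap\Z^{d+1}$ to be $\ll 1$-dense on each such ``sphere''. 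Transporting back through the chart then produces, for every $\xx\in Z$, infinitely many $\frac{\pp}{q}\in Z\cap\Q^d$ with $\|\xx-\frac{\pp}{q}\|\leq C(\xx)/q$.

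The main difficulty, as usual in this circle of ideas, lies in this $d\geq 3$ step: one must set up the light-cone chart so that the height function is calibrated precisely against the $(a_t)$-clock — so that the exponent comes out equal to $1$ and not merely positive — and then supply the requisite equidistribution/counting on $\Gamma\backslash\mathrm{SO}_Q(\R)$. For $d=2$ the form $Q$ is ternary and the needed count is elementary (it is precisely the chord-method statement above), so conics require nothing beyond Dirichlet's theorem; it is the higher-dimensional counting, and its calibration against heights, that makes the theorem substantive.
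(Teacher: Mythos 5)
This paper does not prove Theorem \ref{theoremFKMS}: it imports it from \cite{FKMS}, so the only comparison available is with that external argument, and your general framework (pass to the light cone $\mathcal L_Q$, use that the integral isometries of $Q$ form a non-uniform lattice $\Gamma$ in $\mathrm{SO}_Q(\R)$, and read off approximation from the distribution of integral isotropic vectors) is indeed the right one in spirit. The difficulty is that the one step carrying all the content for $d\geq 3$ is precisely the step you do not supply, and the mechanism you gesture at cannot supply it. Knowing that the number of integral points of $\mathcal L_Q$ of norm at most $T$ has order $T^{d-1}$, or knowing qualitative Duke--Rudnick--Sarnak/Eskin--McMullen equidistribution (these are not ``equivalent,'' by the way), only controls the count of integral points in regions of fixed positive proportion at scale $T$: it yields points within angle $\epsilon$ of the direction of $\hat\xx$, i.e.\ within distance $\epsilon T$ of the ray at height $T$, for each fixed $\epsilon$. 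The theorem requires points within distance $O(1)$ of the ray --- a shrinking target of relative size $T^{-1}$ --- and no pigeonhole on total volume or fixed-scale equidistribution can detect that scale. What is needed is either effective equidistribution with shrinking targets, or (this is the route of \cite{FKMS}) a correspondence converting the problem into recurrence of the specific diagonal orbit attached to $\xx$ into a fixed compact subset of $\Gamma\backslash\mathrm{SO}_Q(\R)$ (which is where $C=C(\xx)$ comes from), together with a separate analysis of the orbits that diverge. Your closing admission that the ``calibration'' and the ``requisite equidistribution/counting'' remain to be supplied is an admission that the main case is not proved.

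There is also a concrete reason why no argument of the exact shape you propose can work: the uniform statement you reduce to --- for every $\hat\xx\in\mathcal L_Q$ infinitely many integral $\mathbf v\in\mathcal L_Q$ with $\|\mathbf v-\ell(\mathbf v)\hat\xx\|\ll 1$, understood to produce infinitely many distinct rational points --- fails along rays through integral vectors. Concretely, take $Z=S^1$ and $\xx=(1,0)$: any $\frac\pp q\in Z\cap\Q^2$ distinct from $\xx$ and within distance $1$ of it has $0\leq p_1<q$, hence $p_2^2=(q-p_1)(q+p_1)\geq q$ and $\|\xx-\frac\pp q\|\geq |p_2|/q\geq q^{-1/2}$, so there are no nontrivial approximants at rate $C/q$ at all; in the cone picture the only integral points within bounded distance of that ray are multiples of a single primitive vector, all projecting to the same rational point. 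So rational points of $Z_\reg$ (in dynamical terms, the divergent orbits) must be excluded or treated separately --- presumably part of what \cite[Remark 8.7]{FKMS} addresses --- whereas your preliminary reductions only dispose of the singular locus, and your $d=2$ chord argument silently degenerates at such points too (the direction of $\xx-\mathbf r_0$ becomes rational and one-dimensional Dirichlet loses a full power of the height). In short: the $d=2$ and singular cases are essentially fine for irrational $\xx$, but for $d\geq 3$ the heart of the theorem is asserted rather than proved, and the counting/volume heuristic offered in its place both fails to reach the needed scale and, taken literally, proves a uniform density statement that is false.
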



Let us define a function $\omega_Z \colon Z \to \R$ via the formula
\[
\omega_Z(\xx) = \sup\left\{ c > 0 : \exists C =C(\xx) \; \exists^\infty \frac{\pp}{q} \in Z \cap \Q^{d} \text{ with } \left\| \xx -\frac{\pp}{q} \right\| \leq \frac{C}{q^c} \right\}.
\]
Then by Theorem \ref{theoremFKMS}, we have $\omega_Z(\xx) \geq 1$ for all $\xx\in Z$. We define $W_{c,Z} := \{\xx \in Z : \omega_Z(\xx) > c\}$ and
\[
\VWA_Z := \bigcup_{c > 1} W_{c,Z}.
\]
It is shown in \cite[Theorem 5.5]{FKMS} that $\VWA_Z$ is a Lebesgue nullset and moreover \cite[Theorem 2.13]{FKMS} that for all $c > 1$, $\HD(W_{c,Z}) = \frac{d - 1}{c}$.

In \cite[Theorem 5.5]{FKMS}, the nullity of $\VWA_Z$ is also shown to hold for measures on $Z$ which are \emph{absolutely friendly} (see \6\ref{sectiondefinitions} for the definition), a class of measures implicitly introduced in \cite{KLW} and explicitly formalized in \cite{PollingtonVelani}, and which has since played a preeminent role in Diophantine approximation and metric measure theory. Historically, these measures are motivated by the following question: if we take a `nice' subset $S \subset \R^d$ of strictly smaller dimension, should we expect the approximability of points in $S$ to reflect the approximation properties of points in $\R^d$? For example, since $\VWA$ is a Lebesgue null set, when should we expect that almost no point of $S$ is very well approximated by rationals, with respect to some natural measure on $S$? It is shown in \cite{KLW} that if $\mu$ is an (absolutely) friendly measure, then $\mu$-a.e. point in its support is not very well approximable,\footnote{The results of \cite{KLW} hold in the stronger multiplicative framework as well.} i.e. $\mu(\VWA) = \mu(\VWA \cap \Supp(\mu)) = 0$. Equivalently, $\mu(W_c) = 0$ for all $c\geq 1 + \frac1d$.

Motivated by this new definition, A. Pollington and S. Velani established \cite{PollingtonVelani} analogues of some classical results in Diophantine approximation for absolutely friendly measures, amongst other things proving results on the Hausdorff dimension of $\VWA \cap \Supp(\mu)$. One of their results is the following (see \cite[Corollary 2]{PollingtonVelani}): if a measure $\mu$ on $\R^d$ is absolutely $\alpha$-friendly and Ahlfors $\delta$-regular, then for any $c \geq 1 + \frac1{d}$,
\begin{equation}
\label{pollingtonvelani}
\HD(W_c \cap \Supp(\mu)) \leq \delta - \alpha\left(1 - \frac{1+\frac1d}{c}\right).
\end{equation}
In this paper, we extend Pollington and Velani's result to the setting of intrinsic approximation on quadratic hypersrufaces. Namely, for a large class of measures $\mu$ we obtain an estimate on the Hausdorff dimension of the sets $W_{c,Z}$ similar to \eqref{pollingtonvelani}:
\begin{theorem}\label{maintheorem}
Let $\mu$ be an absolutely $\alpha$-friendly and Ahlfors $\delta$-regular on $\R^{d - 1}$, let $U\subset\R^{d - 1}$ be an open set containing $\Supp(\mu)$, let $\Psi \colon U \to Z_\reg$ be a local coordinate chart, and let $\nu := \Psi[\mu]$. Then for any $c \geq 1$,
\begin{equation}
\label{main}
\HD(W_{c,Z} \cap \Supp(\nu)) \leq \delta - \alpha\left(1 - \frac{1}{c}\right).
\end{equation}
\end{theorem}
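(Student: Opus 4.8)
\medskip
\noindent\textbf{Proof proposal.}
The plan is to adapt the covering argument of Pollington and Velani \cite{PollingtonVelani} to the intrinsic setting of \cite{FKMS}: the classical counting of rationals is replaced by the rational-point structure of quadratic hypersurfaces from \cite{FKMS}, and the Lebesgue estimates are replaced by the non-concentration ($(C,\alpha)$-good function) properties of absolutely friendly measures from \cite{KLW}. First I would reduce: since Hausdorff dimension is countably stable and $\Psi$ is a diffeomorphism onto its image, hence bi-Lipschitz on any compact $K\subset U$, it suffices to bound $\HD\bigl(W_{c,Z}\cap\Psi(K)\bigr)$ for each such $K$. If $\xx\in W_{c,Z}$ then $\omega_Z(\xx)>c$, so (passing from some exponent $c'>c$) there are infinitely many distinct reduced $\pp/q\in Z\cap\Q^{d}$ with $\|\xx-\pp/q\|\le q^{-c}$. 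Writing $\mu=\Psi^{-1}[\nu]$ on $\R^{d-1}$, this gives
\[
\Psi^{-1}\!\bigl(W_{c,Z}\cap\Psi(K)\bigr)\ \subseteq\ \Supp\mu\ \cap\ \bigcap_{N\ge1}\ \bigcup_{q\ge N}\ \bigcup_{\rr}\ B\bigl(\Psi^{-1}(\rr),\,C q^{-c}\bigr),
\]
the inner union over reduced $\rr\in Z\cap\Q^{d}$ of denominator $q$ lying near $\Psi(K)$ and $C$ a bi-Lipschitz constant; it is the Hausdorff dimension of this $\limsup$ set that must be estimated. Note that $\nu$ itself is \emph{not} absolutely friendly on $\R^{d}$ (it lives on the curved set $Z$), so every appeal to non-concentration must be made for $\mu$ on $\R^{d-1}$ and transported through $\Psi$.

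\smallskip
The heart of the argument is a decay estimate for the measure of the approximating sets indexed by a dyadic block of denominators. Writing $\mathcal S_Q\subset\R^{d-1}$ for the set of points $\Psi^{-1}(\rr)$ with $\rr\in Z\cap\Q^{d}$ reduced of denominator in $[Q,2Q)$ and near $\Psi(K)$, I would establish
\[
\mu\Bigl(\Supp\mu\ \cap\ \bigcup_{\zeta\in\mathcal S_Q}B(\zeta,\,CQ^{-c})\Bigr)\ \ll\ Q^{-\alpha(c-1)}.
\]
This would combine two ingredients. From \cite{FKMS}: the rational points of $Z$ of denominator in $[Q,2Q)$ number $\asymp Q^{d-1}$ in a bounded region and are $\asymp Q^{-1}$-dense in $Z$ there (a sharp, equidistributed form of Theorem~\ref{theoremFKMS}), and — since each has a fixed affine coordinate lying in $\tfrac1q\Z$ — they are covered by an explicit, controlled family of hyperplane sections of $Z$, which under $\Psi^{-1}$ become hyperplanes (in a graph coordinate of $Z$) or low-degree algebraic hypersurfaces of bounded complexity in $\R^{d-1}$. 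From absolute $\alpha$-friendliness of $\mu$: covering $\Supp\mu$ at the scale $Q^{-1}$ dictated by the above density, each ball meets only boundedly many of the relevant hypersurfaces, and the friendly condition — applied through the $(C,\alpha)$-good function machinery of \cite{KLW} to absorb the curvature of the transported hypersurfaces — contributes mass $\ll (Q^{-c}/Q^{-1})^{\alpha}=Q^{-\alpha(c-1)}$ times the $\mu$-mass of each ball; summing over the cover gives the displayed bound. The exponent ``$c-1$'' is ``$c-\tau_Z$'' with $\tau_Z=1$ the Dirichlet exponent for $Z$, and it is precisely in exploiting the sharp value $\tau_Z=1$ (rather than a cruder universal separation of rationals) that the \cite{FKMS} structure enters.

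\smallskip
Granting this, one concludes by a Borel--Cantelli-type covering bound. Organize the $\limsup$ dyadically: with $F_n:=\Supp\mu\cap\bigcup_{2^n\le q<2^{n+1}}\bigcup_\rr B(\Psi^{-1}(\rr),C2^{-cn})$, the set above lies in $\bigcap_N\bigcup_{n\ge N}F_n$. By Ahlfors $\delta$-regularity of $\mu$ together with the key estimate (applied to a bounded enlargement $(F_n)^{(C2^{-cn})}$), $F_n$ is covered by $\ll 2^{-\alpha(c-1)n}\,2^{cn\delta}$ balls of radius $\asymp 2^{-cn}$. Hence for every $s>\delta-\alpha(1-\tfrac1c)$,
\[
\mathcal H^{s}\!\Bigl(\bigcap_N\bigcup_{n\ge N}F_n\Bigr)\ \le\ \liminf_{N\to\infty}\ \sum_{n\ge N}2^{-\alpha(c-1)n}\,2^{cn\delta}\,\bigl(2^{-cn}\bigr)^{s}\ =\ \liminf_{N\to\infty}\ \sum_{n\ge N}2^{\,cn\bigl(\delta-\alpha(1-1/c)-s\bigr)}\ =\ 0,
\]
so $\HD\bigl(W_{c,Z}\cap\Psi(K)\bigr)\le\delta-\alpha(1-\tfrac1c)$ for all such $s$, which proves the theorem.

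\smallskip
The main obstacle is the decay estimate with the \emph{sharp} exponent $\alpha(c-1)$. Two points must be handled together: (i) extracting from \cite{FKMS} the precise density and near-hyperplane location of denominator-$q$ rational points on $Z$, so that friendliness of $\mu$ is applied at the correct scale $Q^{-1}$ — a naive use of the universal $Q^{-2}$-separation of rationals only yields the exponent $\alpha(c-2)$, hence the weaker bound $\delta-\alpha(1-2/c)$; and (ii) estimating neighborhoods of the \emph{curved} hyperplane sections (after transport by the chart) at all thickness scales, which is where the $(C,\alpha)$-goodness of polynomials for friendly measures from \cite{KLW} is needed rather than the bare affine-hyperplane condition. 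Tracking everything through $\Psi$ is a routine, dimension-neutral bookkeeping matter.
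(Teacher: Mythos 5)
Your overall skeleton (dyadic covering, Ahlfors regularity to convert measure bounds into ball counts, the final Hausdorff--Cantelli arithmetic) matches the paper, but the two load-bearing estimates are not established and, as stated, one of them fails. The structural input you use for the rational points is wrong in an essential way: you group approximants by denominator $q\in[Q,2Q)$ and claim they are covered by a ``controlled'' family of hyperplane sections (coordinates lying in integer multiples of $1/q$) of which each ball of radius $Q^{-1}$ meets only boundedly many. As $q$ ranges over $[Q,2Q)$ there are on the order of $Q^{2}$ distinct such sections meeting a bounded region, with gaps as small as $Q^{-2}$, so a $Q^{-1}$-ball can meet on the order of $Q$ of them; summing the decay estimate over these sections costs an extra factor $Q$ and degrades the final bound to $\delta-\alpha(1-\frac1c)+\frac1c$. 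Moreover, the ``$\asymp Q^{d-1}$ points, $Q^{-1}$-dense, equidistributed'' statement you attribute to \cite{FKMS} is not what that paper provides (Theorem \ref{theoremFKMS} is a Dirichlet-type statement and does not localize denominators to dyadic blocks), nor is it what is needed. The correct input is the intrinsic simplex lemma, Lemma \ref{lemmasimplex} (used here as Corollary \ref{corollarysimplex}): in any ball of radius $\rho$, \emph{all} rational points of $Z$ with $q\leq\kappa/\rho$ lie on a \emph{single} affine hyperplane $\LL_B$. This is what makes the ``one section per ball'' count legitimate, and your proposal never invokes it.

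The second gap is the measure estimate for the curved sections. You correctly observe that after transport by the chart the relevant sets are not hyperplanes, but you assert that their neighborhoods are controlled by ``the $(C,\alpha)$-good machinery of \cite{KLW}.'' Absolute $\alpha$-decay is a hypothesis about affine hyperplanes only, and no result of \cite{KLW} yields, for an absolutely friendly measure, a decay bound for neighborhoods of quadratic hypersurfaces that is uniform over the family of sections $\LL\cap Z$ (which vary from ball to ball); even a $(C,\alpha)$-good property for a single polynomial would not give the required uniformity, and near points where the gradient of the defining polynomial degenerates the naive argument breaks down. Supplying exactly this estimate is the main new content of the paper: Proposition \ref{propositionpreserved} (absolute friendliness is preserved by nonsingular maps), Lemma \ref{lemma0} (decay near graphs over quasiconvex sets with bounded second derivative, with an iteration that only yields exponent $\beta<\alpha$), and Lemma \ref{lemma1} together with Corollary \ref{corollaryquadraticneighborhood}, where the quadric is decomposed into dyadic regions away from the coordinate hyperplanes to tame the blow-up of the second derivative, the loss being absorbed by the $\alpha$-decay applied to the coordinate hyperplanes so that the resulting series converges. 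Note also that this route gives decay exponent $\beta$ for every $\beta<\alpha$ rather than $\alpha$ itself, which suffices since $s$ may be taken arbitrarily close to $\delta-\alpha(1-\frac1c)$; your claimed exact-$\alpha$ decay is stronger than anything the available tools provide. Until these two steps are replaced by Corollary \ref{corollarysimplex} and an argument of the type in Lemma \ref{lemma1}, the proof is incomplete.
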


We prove Theorem \ref{maintheorem} by adapting the methods of \cite{PollingtonVelani} to our setting. We remark that the main difficulty in translating their proof occurs when we need to estimate the measure of a set which, in the setting of \cite{PollingtonVelani}, is a neighborhood of a hyperplane (and thus its measure can be directly estimated using the definition of absolute friendliness), but in our setting the corresponding set is not the neighborhood of a hyperplane, but rather the neighborhood of a quadratic hypersurface. See the beginning of \6\ref{sectionproof} for more details.

It follows from the Mass Distribution Principle, Lemma \ref{mdp} below, that if $\mu$ is an Ahlfors $\delta$-regular measure, then any set with positive $\mu$-measure must have Hausdorff dimension at least $\delta$. Therefore Theorem \ref{maintheorem} result generalizes \cite[Theorem 5.5]{FKMS}, which asserts that the sets $W_{c,Z}$ and $\VWA_Z$ are $\mu$-null sets.

It is worth mentioning that an upper bound is the best possible result in this situation, for the following reason: it follows from \cite[Proposition D.1]{FSU4} and \cite[Theorem 5.3]{FKMS} that the set of intrinsically \emph{badly approximable} points on $Z$ supports Ahlfors $\delta$-regular measures with $\delta$ arbitrarily close to $k = \dim(Z)$; by \cite[Proposition 6.3]{KleinbockWeiss1} these measures are absolutely friendly once $\delta > k - 1$.  Since no badly approximable point is very well approximable, it follows that for any such measure $\mu$, the dimension of $\VWA_Z \cap \Supp(\mu)$ is zero, since this intersection is empty.

{\bf Acknowledgements.} The first-named author was supported in part by the Simons Foundation grant \#245708.

\section{Definitions}
\label{sectiondefinitions}

\begin{convention*}
\label{conventionimplied}
The symbols $\lesssim_\times$, $\gtrsim_\times$, and $\asymp_\times$ will denote coarse multiplicative asymptotics. For example, $A\lesssim_\times B$ means that there exists a constant $C > 0$ (the \emph{implied constant}) such that $A\leq C B$. It is understood that the implied constant $C$ is only allowed to depend on certain ``universal'' parameters, to be understood from context.
\end{convention*}

\subsection{Absolutely friendly and Ahlfors regular measures}

We start with a definition introduced in \cite{KLW}: if $\mu$ is a locally finite Borel measure on $\R^k$ and $\alpha>0$, one says that $\mu$ is \emph{absolutely $\alpha$-decaying\footnote{This terminology differs slightly from the one in \cite{KLW}, where a less uniform version was considered.}\/} if there exists $\rho_0 > 0$ such that for every $0 < \rho \leq \rho_0$, every $\xx \in\Supp\,\mu$, every affine hyperplane $\LL\subset \R^k$, and every $\epsilon > 0$, one has
\[
\mu\big(B(\xx,\rho)\cap \thickvar\LL{\epsilon\rho}\big) \lesssim_\times \epsilon^{\alpha}\mu\big(B(\xx,\rho)\big).
\]
Here $\thickvar\LL\rho$ denotes the closed $\rho$-thickening of the affine hyperplane $\LL$, i.e. $\thickvar\LL\rho = \left\{ \xx \in \R^d : \operatorname{dist}(\xx, \LL) \leq \rho\right\}$.

Another useful property is the so-called Federer (doubling) condition. One says that $\mu$ is \emph{Federer\/} if there exists $\rho_0 >0$ such that
\[
\mu\big(B(\xx ,2\rho)\big) \asymp_\times \mu\big(B(\xx,\rho)\big)\,\quad\forall\, \xx\in \Supp(\mu)\; \forall 0 < \rho\leq\rho_0.
\]
Measures which are both absolutely $\alpha$-decaying and Federer are called \emph{absolutely $\alpha$-friendly\/}, a term coined in \cite{PollingtonVelani}.

Many examples of absolutely friendly measures can be found in \cite{KLW, KleinbockWeiss1, StratmannUrbanski1, Urbanski}. The Federer condition is very well studied; it obviously holds when $\mu$ is \emph{Ahlfors regular\/}, i.e.\ when there exist $\delta,\rho_0 > 0$ such that
\[
\mu\big(B(\xx,\rho)\big) \asymp_\times \rho^{\delta}\,\quad\forall\,\xx\in \Supp(\mu)\; \forall\,0 < \rho\leq\rho_0 .
\]
The above property for a fixed $\delta$ will be referred to as \emph{Ahlfors $\delta$-regularity\/}. It is easy to see that the Hausdorff dimension\ of the support of a Ahlfors $\delta$-regular measure is equal to $\delta$. An important class of examples of absolutely decaying and Ahlfors regular measures is provided by limit measures of irreducible families of contracting similarities \cite{KLW} or conformal transformations \cite{Urbanski} of $\R^k$ satisfying the open set condition, as defined in \cite{Hutchinson}. See however \cite{KleinbockWeiss1} for an example of an absolutely friendly measure which is not Ahlfors regular.

The following is a well-known and useful consequence of Ahlfors $\delta$-regularity:
\begin{lemma}[{\cite[Lemma 4.2]{Falconer_book}}]
\label{mdp}
If $\mu(B(\xx, \rho)) \lesssim_\times \rho^\delta$ for all $\xx \in \Supp(\mu)$ and all $0 < \rho \leq \rho_0$, then whenever a set $E$ satisfies $\mu(E) > 0$, we have $\HD(E) \geq \delta$.
\end{lemma}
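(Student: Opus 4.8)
The result is the classical \emph{mass distribution principle}; the plan is to reproduce its short proof (one could also simply cite \cite[Lemma 4.2]{Falconer_book}). I would argue directly that $\mathcal H^\delta(E) > 0$, where $\mathcal H^\delta$ denotes $\delta$-dimensional Hausdorff measure; since $\mathcal H^\delta(E) > 0$ forces $\HD(E) \geq \delta$ by the very definition of Hausdorff dimension, this is enough.

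First a harmless reduction: replacing $E$ by $E \cap \Supp(\mu)$ changes neither the hypothesis $\mu(E) > 0$ nor the conclusion (since $\HD(E) \geq \HD(E \cap \Supp(\mu))$), so we may assume $E \subseteq \Supp(\mu)$. Let $C$ and $\rho_0$ be the implied constant and the scale in the hypothesis $\mu(B(\xx,\rho)) \leq C\rho^\delta$. Let $\{U_i\}$ be any countable cover of $E$ by sets of diameter at most $\rho_0$, and discard any $U_i$ disjoint from $E$. For each remaining $i$ choose $\xx_i \in U_i \cap E \subseteq \Supp(\mu)$; then $U_i \subseteq B\big(\xx_i,\operatorname{diam}(U_i)\big)$ with $\operatorname{diam}(U_i) \leq \rho_0$, so the hypothesis gives $\mu(U_i) \leq \mu\big(B(\xx_i,\operatorname{diam}(U_i))\big) \leq C\,\operatorname{diam}(U_i)^\delta$ (reading $\mu$ as outer measure should $U_i$ fail to be Borel; the enclosing ball is Borel, so this is legitimate). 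Summing and using countable subadditivity of $\mu$,
\[
\mu(E) \;\leq\; \sum_i \mu(U_i) \;\leq\; C\sum_i \operatorname{diam}(U_i)^\delta ,
\]
so $\sum_i \operatorname{diam}(U_i)^\delta \geq C^{-1}\mu(E)$ for every such cover. Taking the infimum over all covers of $E$ by sets of diameter at most $\rho_0$ yields the Hausdorff premeasure bound $\mathcal H^\delta_{\rho_0}(E) \geq C^{-1}\mu(E)$, and since the premeasures increase as the mesh shrinks we get $\mathcal H^\delta(E) \geq \mathcal H^\delta_{\rho_0}(E) \geq C^{-1}\mu(E) > 0$, whence $\HD(E) \geq \delta$.

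There is no genuine obstacle here; the only two points worth a word are (i) converting the ball hypothesis into a bound valid for an arbitrary covering set, handled by enclosing $U_i$ in the ball about one of its own points of $\Supp(\mu)$, and (ii) guaranteeing that such a point exists, which is precisely why one first intersects $E$ with $\Supp(\mu)$.
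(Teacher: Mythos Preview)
Your argument is correct and is exactly the classical mass distribution principle proof. The paper itself does not prove this lemma at all---it merely states it with a citation to \cite[Lemma 4.2]{Falconer_book}---so there is nothing to compare; your write-up simply supplies the standard details that the paper leaves to the reference.
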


\subsection{Hausdorff--Cantelli Lemma}
Although we do not define Hausdorff dimension here (see \cite{Falconer_book} for a definition), we will recall the main tool with which we will bound the Hausdorff dimension from above.

\begin{lemma}[{\cite[Lemma 3.9]{BernikDodson}}]
\label{HClemma}
Let $E$ be a subset of $\R^d$ and suppose that
\[
E \subseteq \{\xx \in \R^d : \xx \in H \text{ for infinitely many } H\in\CC\},
\]
where $\CC$ is a family of open balls. If for some $s >0$,
\[
\operatorname{cost}_s(\CC) := \displaystyle\sum_{H\in\CC} \diam^s(H) < \infty,
\]
then $\HD(E) \leq s$.
\end{lemma}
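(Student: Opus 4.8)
The plan is to prove the statement directly from the definition of Hausdorff dimension: I will show that for every exponent $t>s$ the $t$-dimensional Hausdorff measure $\mathcal{H}^t(E)$ vanishes, which by definition forces $\HD(E)\le s$. Recall that $\mathcal{H}^t(E)=\lim_{\rho\to 0^+}\mathcal{H}^t_\rho(E)$, where $\mathcal{H}^t_\rho(E)$ is the infimum of $\sum_i\diam^t(U_i)$ over all covers $E\subseteq\bigcup_i U_i$ by sets of diameter at most $\rho$, and that $\rho\mapsto\mathcal{H}^t_\rho(E)$ is nonincreasing. In essence this is the Hausdorff-measure analogue of the convergence half of the Borel--Cantelli lemma, and the ball structure of the members of $\CC$ will play no role --- only their diameters matter.

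I would begin with a harmless reduction. Since $\operatorname{cost}_s(\CC)=\sum_{H\in\CC}\diam^s(H)$ is a convergent sum of nonnegative terms, only countably many $H\in\CC$ can have positive diameter, and the remaining members are empty and may be discarded; so we may assume $\CC=\{H_n\}_{n\ge 1}$ is a countable family of nonempty balls. If this family were finite, then no point could belong to infinitely many of its members, so $E=\emptyset$ and there is nothing to prove; hence we assume $\CC$ is countably infinite. Convergence of $\sum_n\diam^s(H_n)$ then gives both $\diam(H_n)\to 0$, so that $\rho_N:=\sup_{n\ge N}\diam(H_n)\to 0$ as $N\to\infty$, and vanishing of the tails $\sum_{n\ge N}\diam^s(H_n)\to 0$.

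The heart of the argument is the following covering observation: for every $N$, the tail collection $\{H_n:n\ge N\}$ already covers $E$, since any $\xx\in E$ lies in $H_n$ for infinitely many $n$ and hence for some $n\ge N$. As each such $H_n$ has diameter at most $\rho_N$, this is an admissible cover at scale $\rho_N$, so for any $t>s$,
\[
\mathcal{H}^t_{\rho_N}(E)\;\le\;\sum_{n\ge N}\diam^t(H_n)\;=\;\sum_{n\ge N}\diam^{t-s}(H_n)\,\diam^s(H_n)\;\le\;\rho_N^{t-s}\sum_{n\ge N}\diam^s(H_n)\;\le\;\rho_N^{t-s}\operatorname{cost}_s(\CC).
\]
Since $\rho_N\to 0$ and $t-s>0$, the right-hand side tends to $0$; because $\mathcal{H}^t_\rho(E)$ is nonincreasing in $\rho$, its limit as $\rho\to 0^+$ may be computed along the sequence $\rho_N$, giving $\mathcal{H}^t(E)=\lim_{N\to\infty}\mathcal{H}^t_{\rho_N}(E)\le\lim_{N\to\infty}\rho_N^{t-s}\operatorname{cost}_s(\CC)=0$. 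As $t>s$ was arbitrary, $\HD(E)\le s$, as desired.

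The argument is elementary and I do not anticipate a genuine obstacle; the only place that requires attention --- and the only place where the hypothesis that each point of $E$ lies in infinitely many members of $\CC$ is used --- is the covering observation, which lets us pass to arbitrarily late tails of the series and thereby cover $E$ at scales tending to zero. The reduction to a countable subfamily is needed merely so that ``tail of the series'' is meaningful.
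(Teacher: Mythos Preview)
The paper does not prove this lemma; it is quoted from \cite{BernikDodson} and used as a black box. Your argument is correct and is the standard proof of the Hausdorff--Cantelli lemma. One minor remark: you can save a step by working directly at the exponent $s$ rather than introducing $t>s$, since the tail cover $\{H_n:n\ge N\}$ already gives $\mathcal{H}^s_{\rho_N}(E)\le\sum_{n\ge N}\diam^s(H_n)\le\operatorname{cost}_s(\CC)$, whence $\mathcal{H}^s(E)<\infty$ and so $\HD(E)\le s$; but your version is of course also fine.
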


\section{Proof of the Main Result}
\label{sectionproof}

\subsection{Strategy of the proof}
A crucial ingredient in the proof of \cite[Corollary 2]{PollingtonVelani} was the \emph{simplex lemma} \cite[Lemma 4]{KTV}, which states that there exists $\kappa > 0$ such that for every ball $B = B(\xx,\rho)\subset\R^d$, the set
\[
\left\{\frac\pp q\in\Q^d : q \leq \kappa \rho^{-1/(1 + \frac1d)}\right\}
\]
is contained in an affine hyperplane $\LL = \LL_B\subset\R^d$. %
In \cite{FKMS}, we introduced an analogue of the simplex lemma for intrinsic approximation on nonsingular manifolds:
\begin{lemma}[{\cite[Lemma 5.1]{FKMS}}]
\label{lemmasimplex}
Let $M\subset\R^d$ be a submanifold of dimension $k$, let $U\subset\R^k$ be an open set, let $\Psi:U\to M$ be a local parameterization, and let $V\subset U$ be compact. Then there exists $\kappa > 0$ such that for every ball $B = B(\xx,\rho)\subset V$, the set
\[
\left\{\frac\pp q \in \Q^d\cap \Psi(B(\xx,\rho)) : q \leq \kappa \rho^{-1/c(k,d)}\right\}
\]
is contained in an affine hyperplane $\LL = \LL_{B,\Psi}\subset\R^d$. Here $c(k,d) > 0$ is a constant depending on $k$ and $d$, with $c(d - 1,d) = 1$.
\end{lemma}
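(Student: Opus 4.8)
The plan is to adapt the classical \emph{simplex lemma} argument --- ``a simplex with rational vertices has volume bounded below in terms of the denominators'' --- replacing the ambient volume bound by one that exploits the fact that a small piece of a $C^2$ submanifold is flat to second order transverse to its tangent space. First I fix the exponent: set $c(k,d) := \tfrac{d+1}{2d-k}$, so that $c(d-1,d) = 1$ and $c(d,d) = 1 + \tfrac1d$, the latter recovering the hypothesis of the classical simplex lemma \cite[Lemma 4]{KTV}. Arguing by contradiction, suppose the set $S$ of those $\frac{\pp}{q} \in \Q^d \cap \Psi(B(\xx,\rho))$ with $q \leq \kappa\rho^{-1/c}$ is \emph{not} contained in any affine hyperplane; then $S$ affinely spans $\R^d$, so we may pick an affine basis $\frac{\pp_0}{q_0}, \dots, \frac{\pp_d}{q_d} \in S$. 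The aim is to show that for $\kappa$ chosen small enough in advance --- depending only on $d$, $k$, and $\sup_V\|D\Psi\|$, $\sup_V\|D^2\Psi\|$, which are finite because $V$ is compact and $\Psi$ is $C^2$ (as it is in the quadric setting) --- this is impossible for every ball $B(\xx,\rho) \subseteq V$.

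The argument then compares two estimates on $T := \tfrac1{d!}\bigl|\det(v_1 - v_0, \dots, v_d - v_0)\bigr|$, the volume of the simplex with vertices $v_i := \frac{\pp_i}{q_i}$. For the lower bound, $T$ is $\tfrac1{d!}$ times the absolute value of the $(d+1)\times(d+1)$ determinant with rows $(1,\dots,1)$ and the $v_i$; scaling the $i$th column by $q_i$ turns this into an integer matrix, whose determinant is nonzero by affine independence and hence at least $1$ in absolute value, so $T \geq \tfrac1{d!\,q_0\cdots q_d} \geq \tfrac1{d!}\kappa^{-(d+1)}\rho^{(d+1)/c} = \tfrac1{d!}\kappa^{-(d+1)}\rho^{2d-k}$, using $(d+1)/c = 2d-k$. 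For the upper bound --- the heart of the matter --- write $v_i = \Psi(\yy_i)$ with $\yy_i \in B(\xx,\rho)$ and Taylor-expand along the segment $[\yy_0,\yy_i] \subseteq B(\xx,\rho)$: $v_i - v_0 = D\Psi(\yy_0)(\yy_i - \yy_0) + O(\rho^2)$, with the implied constant uniform over $V$ by compactness. Thus each column $v_i - v_0$ has norm $O(\rho)$ and lies within $O(\rho^2)$ of the $k$-dimensional subspace $W := D\Psi(\yy_0)(\R^k)$ (that $\dim W = k$ is exactly the nonsingularity of $\Psi$). In an orthonormal basis adapted to $W$, each of the $d$ columns has $k$ entries of size $O(\rho)$ and $d-k$ of size $O(\rho^2)$, so every term of the Leibniz expansion of the determinant is $O(\rho^{k}\cdot\rho^{2(d-k)}) = O(\rho^{2d-k})$; hence $T \leq C'\rho^{2d-k}$ with $C'$ independent of $B$ and of $\kappa$.

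Comparing the two bounds, the powers $\rho^{2d-k}$ cancel exactly --- this is precisely why $c(k,d)$ was taken to be $\tfrac{d+1}{2d-k}$ --- and one is left with $\kappa^{d+1} \geq \tfrac1{d!\,C'}$. Choosing $\kappa < (d!\,C')^{-1/(d+1)}$ at the outset contradicts this, so $S$ must lie in an affine hyperplane, which we take to be $\LL_{B,\Psi}$.

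The main obstacle, and the only genuinely new ingredient beyond the classical case, is the upper bound $T = O(\rho^{2d-k})$ in place of the classical $T = O(\rho^d)$: it is the second-order flatness of $M$ transverse to $T_{v_0}M$ that provides the extra $d-k$ powers of $\rho$. The delicate point is ensuring that the implied constant there (equivalently, the bound on the Taylor remainder) is uniform over all balls $B \subseteq V$ --- which is exactly the role of the compactness of $V$; everything else is routine bookkeeping.
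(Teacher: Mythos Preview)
The paper does not itself prove this lemma; it is quoted from \cite[Lemma~5.1]{FKMS}, so there is no in-paper proof to compare against. Your argument is correct and is the standard simplex-lemma proof adapted to the manifold setting: the choice $c(k,d)=\tfrac{d+1}{2d-k}$ is the right exponent, the integer-determinant lower bound $T\ge (d!\,q_0\cdots q_d)^{-1}$ is standard, and the crucial upper bound $T=O(\rho^{2d-k})$ follows exactly as you say --- in the basis adapted to $W=D\Psi(\yy_0)(\R^k)$ the matrix of $v_i-v_0$'s has $k$ rows with entries $O(\rho)$ and $d-k$ rows with entries $O(\rho^2)$, so Hadamard (or your Leibniz-expansion count) gives the bound. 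Compactness of $V$ is used precisely where you indicate, to make the second-order remainder uniform. This is almost certainly the argument of \cite{FKMS} as well.
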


Now if $\mu$ is an absolutely $\alpha$-friendly measure, then we can estimate the measure of the set $B\cap \thickvar{\LL_B}{\epsilon\rho}$ by using the definition of absolute friendliness. This estimate is needed in the proof of \cite[Corollary 2]{PollingtonVelani}. In the setting of Lemma \ref{lemmasimplex}, we instead need to estimate the measure of the set $B\cap \thickvar{\Psi^{-1}(\LL_{B,\Psi})}{\epsilon\rho}$, and we cannot do this directly from the definition of absolute friendliness. However, if the manifold $M$ is a quadratic hypersurface,\footnote{A reviewer asked what happens if $M$ is not a quadratic hypersurface, but merely a smooth manifold. In this case, the set $\Psi^{-1}(\LL_{B,\Psi})$ (which is not necessarily a manifold) may have complicated singularities, and it is not clear how to analyze them in such generality. By contrast, if $M$ is a quadratic hypersurface then so is $\Psi^{-1}(\LL_{B,\Psi})$, and this gives us some control over its singularities.} then if the chart $\Psi$ is chosen appropriately, then $\Psi^{-1}(\LL_{B,\Psi})$ will be a quadratic hypersurface in $\R^{d - 1}$, and it turns out that we can use this fact to bound the measure appropriately. Another difficulty comes from the fact that the choice of $\Psi$ such that $\Psi^{-1}(\LL_{B,\Psi})$ is a quadratic hypersurface may not be the same as the $\Psi$ which is given in the hypothesis of Theorem \ref{maintheorem}. So the proof of Theorem \ref{maintheorem} has three major parts:
\begin{itemize}
\item[1.] proving that absolute friendliness is preserved under nonsingular transformations;
\item[2.] estimating the measure of a neighborhood of a quadratic hypersurface under an absolutely friendly measure;
\item[3.] using the proof idea of \cite{PollingtonVelani} to finish the proof.
\end{itemize}
In what follows, $\mu$ always denotes a compactly supported absolutely $\alpha$-friendly measure on $\R^k$, where $k = d - 1$. Also, we fix $\beta < \alpha$.

\subsection{Absolute friendliness and nonsingular transformations}
We shall prove the following:
\begin{proposition}
\label{propositionpreserved}
Let $U\subset\R^k$ be an open set containing $\Supp(\mu)$, and let $\Phi:U\to V \subset \R^k$ be an invertible nonsingular transformation. Then $\Phi[\mu]$ is absolutely $\beta$-friendly.
\end{proposition}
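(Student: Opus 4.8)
The plan is to reduce everything to local statements near a point $\xx_0\in\Supp(\mu)$ and exploit that a nonsingular $C^1$ map is, on a small enough neighborhood, a bi-Lipschitz perturbation of its derivative. Concretely, fix $\xx_0\in\Supp(\mu)$ and let $A=D\Phi(\xx_0)$, an invertible linear map. On a sufficiently small ball $B_0=B(\xx_0,r_0)$ we have, for all $\xx,\yy\in B_0$,
\[
\tfrac12\,\|A(\xx-\yy)\|\le\|\Phi(\xx)-\Phi(\yy)\|\le 2\,\|A(\xx-\yy)\|,
\]
so $\Phi$ distorts distances by a bounded factor, hence maps balls to sets comparable to ellipsoids of comparable radius; in particular $\Phi(B(\xx,\rho))$ is sandwiched between two balls of radii $\asymp_\times\rho$ centered at $\Phi(\xx)$. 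Combining this with the Federer property of $\mu$, one gets that $\Phi[\mu]$ is Federer (this is the easy half). By a compactness argument, finitely many such neighborhoods $B_0$ cover $\Supp(\mu)$, so all implied constants can be taken uniform; I will phrase the proof as "it suffices to check the estimates for $\xx_0$ ranging over $\Supp(\mu)$ with constants locally uniform."

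For the absolute decay estimate, fix $\yy=\Phi(\xx)\in\Supp(\Phi[\mu])$, a small $\rho$, an affine hyperplane $\LL\subset\R^k$, and $\epsilon>0$; we must bound $\Phi[\mu]\big(B(\yy,\rho)\cap\thickvar{\LL}{\epsilon\rho}\big)=\mu\big(\Phi^{-1}(B(\yy,\rho))\cap\Phi^{-1}(\thickvar{\LL}{\epsilon\rho})\big)$. By the bi-Lipschitz estimate, $\Phi^{-1}(B(\yy,\rho))\subset B(\xx,C\rho)$ for a uniform $C$. The key geometric point is that $\Phi^{-1}(\thickvar{\LL}{\epsilon\rho})\cap B(\xx,C\rho)$ is contained in $\thickvar{\LL'}{C'\epsilon\rho}$ for some affine hyperplane $\LL'$: indeed, on $B(\xx,C\rho)$ the map $\Phi$ is within $o(\rho)$ (or at worst $O(\rho)$, by $C^1$-ness, which can be absorbed) of the affine map $\zz\mapsto\Phi(\xx)+A(\zz-\xx)$, and the preimage of the slab $\thickvar{\LL}{\epsilon\rho}$ under this affine map is exactly a slab of width $\asymp_\times\epsilon\rho$ about the hyperplane $\LL'=A^{-1}(\LL-\Phi(\xx))+\xx$; the $C^1$ error only fattens this slab by a controlled amount. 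Then absolute $\alpha$-decay of $\mu$ gives
\[
\mu\big(B(\xx,C\rho)\cap\thickvar{\LL'}{C'\epsilon\rho}\big)\lesssim_\times(\epsilon)^{\alpha}\,\mu\big(B(\xx,C\rho)\big)\lesssim_\times\epsilon^\alpha\,\mu\big(B(\xx,\rho)\big)=\epsilon^\alpha\,\Phi[\mu]\big(B(\yy,\rho)\big),
\]
where the middle step again uses Federer. Taking suprema/using $\beta<\alpha$ absorbs the multiplicative constant $C'^\alpha$ into $\epsilon^{\beta}$ once $\epsilon$ is small (and for $\epsilon$ bounded below the estimate is trivial), yielding absolute $\beta$-decay.

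The main obstacle is the geometric claim that the $\Phi$-preimage of a thin affine slab, intersected with a small ball, sits inside a slightly thicker affine slab with width still $\asymp_\times\epsilon\rho$. One must be careful that the nonlinear part of $\Phi$ contributes an error of size $O(\rho)$ on $B(\xx,C\rho)$ — not $O(\epsilon\rho)$ — so naively this would fatten an $\epsilon\rho$-slab to a $\rho$-slab, which is useless. The resolution is that this $O(\rho)$ error is itself a deterministic quadratic-type correction, so one should compare with the affine approximation of $\Phi$ and track that the \emph{transverse} displacement relative to $\LL'$ picks up only an $O(\epsilon\rho)$ term from points that were already in the slab, plus a quadratic term of size $O(\rho^2/r_0)$ which is $\le\epsilon\rho$ whenever $\rho$ is small — but that last inequality fails for fixed small $\rho$ and tiny $\epsilon$. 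The correct fix is the standard one: shrink $r_0$ (hence the range of $\rho$) is not enough, so instead one handles the regime $\epsilon\rho\ge \rho^2$ i.e. $\epsilon\ge\rho$ by the slab argument, and the complementary regime $\epsilon<\rho$ by covering $B(\yy,\rho)\cap\thickvar{\LL}{\epsilon\rho}$ by $\asymp_\times(\rho/\epsilon)^{k-1}$-many balls of radius $\asymp_\times\epsilon\rho$... I will instead follow the cleaner route used in the literature (e.g. \cite{KleinbockWeiss1}): absolute decay is preserved under $C^2$ diffeomorphisms because such a map, restricted to $B(\xx,C\rho)$, differs from an affine map by $O(\rho^2)$ in $C^0$ and $O(\rho)$ in $C^1$, and one decomposes the slab $\thickvar{\LL}{\epsilon\rho}$ into $O(1/\epsilon)$ parallel sub-slabs of width $\epsilon^2\rho$ only when needed; carrying this bookkeeping through, with $\beta<\alpha$ giving the slack to absorb all polynomial-in-$\epsilon$ constants, completes the argument. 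I expect this bookkeeping — making the slab-preimage-is-a-slab step quantitatively honest — to be the one genuinely delicate point; the Federer half and the reduction to a uniform local statement are routine.
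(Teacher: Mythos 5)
You have correctly isolated the crux --- for $\epsilon\ll\rho$ the preimage $\Phi^{-1}(\LL)$ is genuinely curved at scale $\rho$, so $B(\xx,C\rho)\cap\Phi^{-1}\big(\thickvar{\LL}{\epsilon\rho}\big)$ is \emph{not} contained in an $O(\epsilon\rho)$-slab --- but none of the fixes you offer closes this gap. The ball-covering fallback (covering the slab by $\asymp_\times(1/\epsilon)^{k-1}$ balls of radius $\epsilon\rho$) cannot produce a measure bound here, because in this proposition $\mu$ is only assumed absolutely $\alpha$-friendly, not Ahlfors regular: Federer doubling iterated down $\log_2(1/\epsilon)$ scales gives a factor $\epsilon^{-O(1)}$, i.e.\ the wrong direction, and even assuming Ahlfors $\delta$-regularity the count yields exponent $\delta-(k-1)$, not an exponent near $\alpha$. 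Decomposing the \emph{target} slab into $1/\epsilon$ sub-slabs of width $\epsilon^2\rho$ does nothing about the curvature of the preimage. And the claim that ``$\beta<\alpha$ gives the slack to absorb all polynomial-in-$\epsilon$ constants'' is false: the margin $\beta<\alpha$ absorbs only sub-polynomial factors (constants, powers of $\log(1/\epsilon)$), never a lost power of $\epsilon$. The natural single-scale repair --- cover $B(\xx,\rho)$ by balls of radius $\sqrt\epsilon\,\rho$, inside which the curved preimage deviates from its tangent hyperplane by $O(\epsilon\rho)$, and apply $\alpha$-decay in each --- can be completed, but it only gives decay exponent $\alpha/2$, which is strictly weaker than the statement and would degrade Theorem \ref{maintheorem} to $\delta-\frac\alpha2\left(1-\frac1c\right)$.

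The missing idea, which is the actual content of the paper's argument, is a multi-scale bootstrap. The paper writes $\Phi^{-1}(\LL)$ locally (implicit function theorem, with uniform bounds $\|\Phi'(\cdot)^{-1}\|\le c_0$, $\|\Phi''\|\le c_1$ near the compact support) as a graph $\Gamma$ of a function with controlled second derivative over a quasiconvex set, and proves in Lemma \ref{lemma0} the inequality $\mu\big(\thickvar{B\cap\Gamma}{\epsilon\rho}\big)\lesssim_\times\epsilon^{\alpha/2}\,\mu\big(\thickvar{B\cap\Gamma}{\sqrt\epsilon\rho}\big)$: the thin neighborhood of the \emph{curved} set is compared to its thicker neighborhood, not to the whole ball. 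Iterating this with $\epsilon$ replaced by $\epsilon^{1/2^n}$ for $n=0,\dots,N-1$ with $N\approx\log_2\log(1/\epsilon)$, the exponents telescope to $\alpha(1-2^{-N})$ while the accumulated constant is only $C^{\log_2\log(1/\epsilon)}$, a sub-polynomial factor that \emph{is} absorbed by taking $\beta<\alpha$. This yields $\mu\big(B\cap\thickvar{\Phi^{-1}(\LL)}{\epsilon\rho}\big)\lesssim_\times\epsilon^\beta\mu(B)$ for every $\beta<\alpha$, and then pushing forward through the bi-Lipschitz map $\Phi$ (your Federer half, which is fine, as is the reduction to locally uniform constants by compactness) finishes the proof. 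Without some iteration over this doubly exponential sequence of scales, your sketch cannot reach any exponent beyond $\alpha/2$.
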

The following lemma will be proved in somewhat greater generality than is needed to prove Proposition \ref{propositionpreserved}, since this generality will be used later in the proof of Theorem \ref{maintheorem}. 

\begin{definition}
A set $S\subset\R^{k - 1}$ is \emph{$K$-quasiconvex} if for every $\xx,\yy\in S$ there exists a piecewise smooth path $\gamma\subset S$ connecting $\xx$ and $\yy$ such that $\Length(\gamma)\leq K\|\yy - \xx\|$.
\end{definition}

\begin{lemma}
\label{lemma0}
Let $S\subset\R^{k - 1}$ be a $K_1$-quasiconvex set, and let $f:S\to\R$. Let $\Gamma = \Gamma(S,f)$ denote the graph of $f$, so that $\Gamma\subset\R^k$. Then for every ball $B = B(\xx,\rho)$ with $\xx\in\Supp(\mu)$ and $0 < \rho \leq K_2/\|f''\|_S$ and for all $\epsilon > 0$, we have
\begin{align} \label{2B}
\mu\big(\thickvar{B\cap\Gamma}{\epsilon\rho}\big) &\lesssim_\times \epsilon^{\alpha/2} \mu(\thickvar{B\cap\Gamma}{\sqrt\epsilon\rho}) \note{if $\epsilon\leq 1$}\\ \label{betabound}
\mu\big(B\cap \thickvar{\Gamma}{\epsilon\rho}\big) &\lesssim_\times \epsilon^\beta \mu(B).
\end{align}
The implied constants can depend on $K_1$, $K_2$, $\mu$, and $\beta$.
\end{lemma}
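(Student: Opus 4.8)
The plan is to establish the self-improving inequality \eqref{2B} directly from absolute $\alpha$-decay, and then obtain \eqref{betabound} by iterating \eqref{2B} a bounded number of times. The geometric input is a flatness estimate coming from quasiconvexity: if $\yy_0,\yy\in S$ satisfy $\|\yy-\yy_0\|\le r$, then by $K_1$-quasiconvexity there is a piecewise smooth path $\gamma\subset S$ from $\yy_0$ to $\yy$ with $\Length(\gamma)\le K_1 r$, and integrating the Hessian bound $\|f''\|_S$ along $\gamma$ (first comparing $\nabla f(\gamma(s))$ with $\nabla f(\yy_0)$, then integrating $\nabla f(\gamma(s))\cdot\gamma'(s)$) yields
\[
\bigl|f(\yy)-f(\yy_0)-\nabla f(\yy_0)\cdot(\yy-\yy_0)\bigr|\le\tfrac12 K_1^2\,\|f''\|_S\, r^2 .
\]
Consequently, over any subset of $S$ of diameter $\le r$ the graph $\Gamma$ lies within Euclidean distance $\tfrac12 K_1^2\|f''\|_S r^2$ of the affine hyperplane $\LL_{\yy_0}$ through $(\yy_0,f(\yy_0))$ tangent to $\Gamma$ there; when $r\le 3\sqrt\epsilon\,\rho$ (the scale that occurs below) and $\rho\le K_2/\|f''\|_S$, this distance is $\lesssim_\times\epsilon\rho$ with implied constant depending only on $K_1,K_2$. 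This is the only place quasiconvexity enters, and the mild point worth checking is that the connecting path need not remain inside the relevant ball.

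To prove \eqref{2B}, fix $\epsilon\le 1$ and write $B'=B\cap\Gamma$; we may assume $\epsilon$ is below a threshold $\epsilon_1(K_1,K_2)$, since for larger $\epsilon$ one has $\epsilon^{\alpha/2}\asymp_\times 1$ while $\thickvar{B'}{\epsilon\rho}\subseteq\thickvar{B'}{\sqrt\epsilon\rho}$. Let $\{\zz_i\}$ be a maximal $\sqrt\epsilon\rho$-separated subset of $\Supp(\mu)\cap\thickvar{B'}{\epsilon\rho}$, so the balls $B_i=B(\zz_i,\sqrt\epsilon\rho)$ cover $\Supp(\mu)\cap\thickvar{B'}{\epsilon\rho}$ with dimensionally bounded overlap and $\bigcup_i B_i\subseteq\thickvar{B'}{2\sqrt\epsilon\rho}$. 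Given $\ww\in B_i\cap\thickvar{B'}{\epsilon\rho}$, pick $\vv=(\yy,f(\yy))$ and $\vv_i=(\yy_i,f(\yy_i))$ in $B'$ within $\epsilon\rho$ of $\ww$ and of $\zz_i$ respectively; then $\|\yy-\yy_i\|\le\|\vv-\vv_i\|\le 3\sqrt\epsilon\rho$, so the flatness estimate places $\ww$ within $C\epsilon\rho$ of $\LL_i:=\LL_{\yy_i}$ with $C=C(K_1,K_2)$. Hence $B_i\cap\thickvar{B'}{\epsilon\rho}\subseteq B_i\cap\thickvar{\LL_i}{(C\sqrt\epsilon)\cdot\sqrt\epsilon\rho}$, and absolute $\alpha$-decay applied to $B_i$ (its center lies in $\Supp(\mu)$; the case $\rho>\rho_0$ is handled separately since $\rho$ then ranges over a compact set, with constants allowed to depend on $\mu$) gives $\mu\bigl(B_i\cap\thickvar{B'}{\epsilon\rho}\bigr)\lesssim_\times(C\sqrt\epsilon)^\alpha\mu(B_i)\lesssim_\times\epsilon^{\alpha/2}\mu(B_i)$. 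Summing over $i$ and using bounded overlap together with $\bigcup_i B_i\subseteq\thickvar{B'}{2\sqrt\epsilon\rho}$ gives \eqref{2B}, up to replacing $\sqrt\epsilon\rho$ by $2\sqrt\epsilon\rho$ on the right — a harmless change that disappears in the iteration below (or can be undone by rescaling $\epsilon$).

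Finally, \eqref{betabound} is trivial for $\epsilon\ge 1$ (the right side then exceeds $\mu(B)$), so assume $\epsilon<1$. With $\tilde B'=\Gamma\cap B(\xx,2\rho)$ we have $B\cap\thickvar{\Gamma}{\epsilon\rho}\subseteq\thickvar{\tilde B'}{\epsilon\rho}$, and $\tilde B'$ is of the form appearing in \eqref{2B} after replacing $K_2$ by $2K_2$. Iterating \eqref{2B} $n$ times with $\epsilon_0=\epsilon$ and $\epsilon_{j+1}=\sqrt{\epsilon_j}$ gives
\[
\mu\bigl(\thickvar{\tilde B'}{\epsilon\rho}\bigr)\lesssim_\times \epsilon^{\alpha(1-2^{-n})}\,\mu\bigl(\thickvar{\tilde B'}{c_n\rho}\bigr)
\]
for a constant $c_n$ depending only on $n$; since $\thickvar{\tilde B'}{c_n\rho}$ sits inside a ball of radius $\asymp_n\rho$, the Federer property bounds its measure by $\lesssim_\times\mu(B)$. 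Choosing $n=\bigl\lceil\log_2\tfrac{\alpha}{\alpha-\beta}\bigr\rceil$ makes $\alpha(1-2^{-n})\ge\beta$, hence $\epsilon^{\alpha(1-2^{-n})}\le\epsilon^\beta$, and since this $n$ depends only on $\alpha,\beta$ all accumulated constants remain controlled. The crux of the argument — and the reason one can only reach exponents $\beta<\alpha$ — is precisely that the iteration must be of bounded depth, so that the product of the (fixed but strictly greater than $1$) implied constants produced by each application of \eqref{2B} does not degenerate.
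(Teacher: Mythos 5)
Your proof is correct, and for \eqref{2B} it is essentially the paper's argument: cover the $\epsilon\rho$-thickening by balls of radius $\asymp\sqrt\epsilon\,\rho$ centered in $\Supp(\mu)$, use quasiconvexity plus the global Hessian bound to flatten the graph onto a tangent hyperplane at scale $\epsilon\rho$ over each such ball, apply absolute $\alpha$-decay, and sum with bounded multiplicity (the factor $2\sqrt\epsilon\,\rho$ versus $\sqrt\epsilon\,\rho$ on the right is cosmetic and fixable by taking the separation to be $\sqrt\epsilon\,\rho/2$, as the paper does). The only real divergence is in how \eqref{betabound} is extracted from \eqref{2B}: you iterate a \emph{fixed} number $n=\lceil\log_2\frac{\alpha}{\alpha-\beta}\rceil$ of times, so the accumulated constants are bounded and the exponent $\alpha(1-2^{-n})\ge\beta$ is reached directly, whereas the paper iterates $N\approx\log_2\log(1/\epsilon)$ times to squeeze out the full exponent $\alpha$, at the price of a factor $C^{N}\asymp(\log\frac1\epsilon)^{\log_2 C}$ which is then absorbed using $\beta<\alpha$. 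Both terminations exploit $\beta<\alpha$ and both are valid; yours keeps the constant bookkeeping trivial, the paper's makes transparent that the true loss against the exponent $\alpha$ is only polylogarithmic. (Your closing remark that bounded depth is ``the reason'' one only reaches $\beta<\alpha$ describes your scheme, not the only option.) Minor points you share with the paper's own write-up and which are harmless in the intended applications: the $\alpha$-decay and doubling properties are invoked without checking the radii against $\rho_0$, and the depth-$n$ iteration requires enlarging the ball (and hence $K_2$) boundedly at each step.
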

\begin{proof}[Proof of \eqref{2B}]
If $\epsilon\geq 1/4$ then \eqref{2B} is trivial, so suppose that $\epsilon\leq 1/4$ and thus $\sqrt\epsilon \geq 2\epsilon$.

Let $\w\rho = \sqrt\epsilon\rho/2$, and let $A\subset \thickvar{B\cap\Gamma}{\epsilon\rho}$ be a maximal $\w\rho$-separated set. Fix $\yy\in A$ and let $\w B = B(\yy,\w\rho)$. Fix $\zz\in \w B$ and let $\gamma\subset S$ be a piecewise smooth path connecting $\yy$ and $\zz$ such that $\Length(\gamma)\leq K_1 \|\zz - \yy\|$. Applying the fundamental theorem of calculus to the functions $f'$ and $f$ on $\gamma$ gives
\begin{align*}
\|f'(\cdot) - f'(\yy)\|_\gamma &\leq \|f''\|_S \cdot \Length(\gamma) \leq (K_2/\rho) (K_1 \w\rho) \leq K_1 K_2 \sqrt\epsilon\\
\|f(\zz) - f(\yy) - f'(\yy)[\zz - \yy]\| &\leq \|f'(\cdot) - f'(\yy)\|_\gamma \cdot \Length(\gamma) \leq (K_1 K_2 \sqrt\epsilon) (K_1 \w\rho) = K_1^2 K_2 \sqrt\epsilon\w\rho.
\end{align*}
So if $\LL$ denotes the hyperplane $\{(\zz,f(\yy) + f'(\yy)[\zz - \yy]) : \zz\in\R^{k - 1}\}$, then $B\cap\Gamma\subset \thickvar\LL{K_1^2 K_2 \sqrt\epsilon\w\rho}$. Thus
\begin{align*}
\mu\big(\w B\cap \thickvar{B\cap\Gamma}{\epsilon\rho}\big)
&=_\pt \mu\big(\w B\cap \thickvar\LL{(K_1^2 K_2 + 2) \sqrt\epsilon\w\rho}\big) \since{$\epsilon\rho = 2\sqrt\epsilon\w\rho$}\\
&\lesssim_\times (\sqrt\epsilon)^\alpha \mu(\w B) = \epsilon^{\alpha/2} \mu(\w B) \note{$\alpha$-decay property}\\
\mu\big(\thickvar{B\cap\Gamma}{\epsilon\rho}\big)
&\leq_\pt \sum_{\yy\in A} \mu\big(B(\yy,\w\rho)\cap \thickvar{B\cap\Gamma}{\epsilon\rho}\big)\\
&\lesssim_\times \sum_{\yy\in A} \epsilon^{\alpha/2} \mu\big(B(\yy,\w\rho)\big)\\
&\lesssim_\times  \epsilon^{\alpha/2} \mu\big(\thickvar{B\cap\Gamma}{\epsilon\rho + \w\rho}\big) \note{bounded multiplicity}\\
&\leq_\pt \epsilon^{\alpha/2} \mu\big(\thickvar{B\cap\Gamma}{\sqrt\epsilon\rho}\big) \since{$\sqrt\epsilon\geq 2\epsilon$}
&\qedhere\end{align*}
\end{proof}
\begin{proof}[Proof of \eqref{betabound}]
Let $C > 1$ denote the implied constant of \eqref{2B}. Let $N = \lfloor \log_2 \log(1/\epsilon)\rfloor$, and for each $n = 0,\ldots,N - 1$, plug in $\epsilon := \epsilon^{1/2^n}$ and $\rho := 2\rho$ in \eqref{2B}. (If $N$ is negative or undefined, then $\epsilon\geq 1/e$ and thus \eqref{betabound} is trivially true.) Taking the product yields
\[
\frac{\mu\big(\thickvar{2B\cap\Gamma}{\epsilon\rho}\big)}{\mu\big(\thickvar{2B\cap\Gamma}{\epsilon^{1/2^N} \rho}\big)} \leq \prod_{n = 0}^{N - 1} C \epsilon^{\alpha/2^{n + 1}} = C^N \epsilon^{\alpha(1 - 2^{-N})}.
\]
Since $\epsilon^{1/2^N}\asymp_\times 1$ and $\mu\big(\thickvar{2B\cap\Gamma}{\epsilon^{1/2^N} \rho}\big) \leq \mu(3B)\asymp_\times \mu(B)$, we get
\[
\mu\big(B\cap \thickvar{\Gamma}{\epsilon\rho}\big) \leq \mu\big(\thickvar{2B\cap \Gamma}{\epsilon\rho}\big) \lesssim_\times C^{\log_2\log(1/\epsilon)} \epsilon^\alpha \mu(B).
\]
Since $\beta < \alpha$, this demonstrates \eqref{betabound}.
\end{proof}

Now let $U\subset\R^k$ be an open set containing $\Supp(\mu)$ and let $\Phi:U\to V \subset \R^k$ be an invertible nonsingular transformation. Since $\Supp(\mu)$ is compact, after shrinking $U$ we can assume that $\|\Phi'(\xx)^{-1}\| \leq c_0$ and $\|\Phi''(\xx)\| \leq c_1$ for all $\xx\in U$. Fix a ball $B(\xx,\rho)$ centered at $\xx\in\Supp(\mu)$, let $\LL\subset\R^k$ be an affine hyperplane, and let $M = \Phi^{-1}(\LL)$. Writing $\LL = L^{-1}(t)$ for some linear map $L:\R^k\to\R$ and some $t\in\R$, we get $M = (L\circ\Phi)^{-1}(t)$. Write $L[\yy] = \lb \yy,\ww\rb$ for some $\ww\in\R^k$, and without loss of generality suppose that $\|\ww\| = 1$. Let $\vv\in\R^k$ be the unique unit vector such that $\Phi'(\xx)[\vv] = a\ww$ for some $a > 0$. Then
\[
(L\circ\Phi)'(\xx)[\vv] = a = \|\Phi'(\xx)[\vv]\| \geq 1/\|\Phi'(\xx)^{-1}\| \geq 1/c_0 > 0.
\]
So for all $\yy\in B(\xx,\rho)$, we have $(L\circ\Phi)'(\yy)[\vv] \geq 1/c_0 - c_1\rho$. If we assume that $\rho \leq \rho_0 := 1/(2c_0 c_1)$, then we get $(L\circ\Phi)'(\yy)[\vv] \geq 1/(2c_0)$, so by the implicit function theorem, $B(\xx,\rho)\cap M$ is contained in a rotated version of a set $\Gamma$ as in Lemma \ref{lemma0}. Moreover, the constants $K_1$ and $K_2$ corresponding to this $\Gamma$ will be uniform with respect to $\xx$. So by Lemma \ref{lemma0} we get
\[
\mu\big(B(\xx,\rho)\cap\thickvar M{\epsilon\rho}\big) \lesssim_\times \epsilon^\beta \mu(B(\xx,\rho)).
\]
Since $\Phi$ is bi-Lipschitz (after shrinking $U$), this inequality implies that the pushforward measure $\Phi[\mu]$ is absolutely $\beta$-friendly.

\subsection{Measuring neighborhoods of quadratic hypersurfaces}

\begin{lemma}
\label{lemma1}
For every quadratic hypersurface $Z\subset\R^k$, for every ball $B = B(\ww,\rho)\subset\R^k$ with $\ww\in\Supp(\mu)$ and $0 < \rho \leq 1$, and for every $\epsilon > 0$,
\[
\mu(B\cap \thickvar Z{\epsilon\rho}) \lesssim_\times \epsilon^\beta \mu(B).
\]
\end{lemma}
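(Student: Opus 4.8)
The plan is to reduce everything, after a dyadic decomposition according to the size of $\nabla P$, to Lemma~\ref{lemma0}. First I would make harmless reductions: we may assume $\epsilon\le\epsilon_0$ for a small absolute constant (else the bound is trivial); since $\Supp\mu$ is compact and $\rho\le 1$ we may fix a ball $B_0$ containing a neighborhood of $\Supp\mu$ and only work inside it; and writing $Z=Z_P$ for a nonzero quadratic $P$ with quadratic part $\xx\mapsto\xx^\top A\xx$, we may normalize so that $\max\bigl(\|A\|,\sup_{B_0}|\nabla P|,\sup_{B_0}|P|\bigr)=1$. If $Z\cap B_0$ is empty, a single point, or contained in a union of at most two affine hyperplanes, the bound is immediate from absolute $\alpha$-decay; and if $\|A\|$ is smaller than a suitable absolute constant, one checks from the normalization that $Z\cap B_0$ is empty or a hypersurface of uniformly bounded curvature, so the bound follows from Lemma~\ref{lemma0}. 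Hence I may assume $Z$ is a genuine quadric with $\|A\|\asymp_\times 1$.

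I would then isolate two elementary estimates. \textbf{(i)} (Slab estimate.) For $\zz\in\Supp\mu$, $0<s\le\rho$, and any affine hyperplane $\LL\ni\zz$ we have $B(\zz,s)\subseteq B(\zz,\rho)\cap\thickvar\LL s$, so absolute $\alpha$-decay together with the Federer property gives $\mu(B(\zz',s))\lesssim_\times(s/\rho)^\alpha\mu(B)$ whenever $s\le\rho$ and $B(\zz',s)$ meets $B\cap\Supp\mu$. \textbf{(ii)} (Gradient estimate.) Because $\|A\|\asymp_\times 1$, the sublevel set $\{\xx:|\nabla P(\xx)|\le t\}$ is contained in the $Ct$-neighborhood of an affine hyperplane (the one through a zero of $\nabla P$ orthogonal to a top eigenvector of $A$; the set is empty for small $t$ if $\nabla P$ has no zero), so by absolute $\alpha$-decay $\mu\bigl(\{|\nabla P|\le t\}\cap B\bigr)\lesssim_\times(t/\rho)^\alpha\mu(B)$ for $t\le c\rho$.

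Next comes a dichotomy on $|\nabla P(\ww)|$. If $|\nabla P(\ww)|\ge C_0\rho$ with $C_0$ large, then since $\nabla P$ is Lipschitz with constant $\asymp_\times 1$, on $B(\ww,2\rho)$ one has $|\nabla P|\asymp_\times|\nabla P(\ww)|$ with nearly constant direction, so by the implicit function theorem $Z\cap B(\ww,2\rho)$ is, in suitable orthonormal coordinates, the graph of a $C^2$ function over a convex domain $S$ with $\|f''\|_S\le K_2/\rho$ (which holds once $C_0$ is large, since $\|f''\|_S\lesssim_\times 1/|\nabla P(\ww)|$), and \eqref{betabound} of Lemma~\ref{lemma0} finishes this case; the same works, with $\rho$ replaced by $g$, on any ball of radius $\asymp_\times g$ centered where $|\nabla P|\asymp_\times g$. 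If instead $|\nabla P(\ww)|<C_0\rho$, then $|\nabla P|<C_1\rho$ throughout $B(\ww,2\rho)$; here I would split $\thickvar Z{\epsilon\rho}\cap B\cap\Supp\mu$ into a core $\{|\nabla P|<A_0\epsilon\rho\}$ and dyadic shells $\mathcal S_g=\{|\nabla P|\in[g/2,2g]\}\cap\thickvar Z{\epsilon\rho}\cap B\cap\Supp\mu$ over $g=2^{-j}C_1\rho$, $j\ge 0$, down to $g\asymp_\times A_0\epsilon\rho$. By \textbf{(ii)} the core has measure $\lesssim_\times\epsilon^\alpha\mu(B)$. For each shell I would cover $\mathcal S_g$ by boundedly-overlapping balls of radius $\asymp_\times g$ centered on $\mathcal S_g$; on each such ball $Z$ is a graph with $\|f''\|\lesssim_\times 1/g$, so \eqref{betabound} supplies a factor $(\epsilon\rho/g)^\beta$ (note $\epsilon\rho/g\le 1$), while bounded overlap together with \textbf{(ii)} bounds the sum of the measures of those balls by $\mu(\{|\nabla P|\le Cg\}\cap B(\ww,C\rho))\lesssim_\times(g/\rho)^\alpha\mu(B)$; hence $\mu(\mathcal S_g)\lesssim_\times(\epsilon\rho/g)^\beta(g/\rho)^\alpha\mu(B)=\epsilon^\beta(g/\rho)^{\alpha-\beta}\mu(B)$. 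Since $\beta<\alpha$ the geometric series over the dyadic $g\le C_1\rho$ converges, giving $\sum_g\mu(\mathcal S_g)\lesssim_\times\epsilon^\beta\mu(B)$, and altogether $\mu(B\cap\thickvar Z{\epsilon\rho})\lesssim_\times(\epsilon^\alpha+\epsilon^\beta)\mu(B)\lesssim_\times\epsilon^\beta\mu(B)$.

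The step I expect to be the main obstacle is this last one: the portion of $\thickvar Z{\epsilon\rho}$ near the ``center'' of the quadric, where $Z$ is highly curved, cannot be controlled at a single scale, and the point of the dyadic-in-$|\nabla P|$ splitting is that a shell where $|\nabla P|\asymp_\times g$ lies in the $\asymp_\times g$-neighborhood of an affine hyperplane — which is exactly what the normalization $\|A\|\asymp_\times 1$ (hence the reductions in the first step) provides — so that \textbf{(i)} contributes a gain $(g/\rho)^\alpha$ that, balanced against the gain $(\epsilon\rho/g)^\beta$ coming from Lemma~\ref{lemma0} at scale $g$, is geometrically summable precisely because $\alpha>\beta$. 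The remaining points — the reductions of the first step and the uniform quasiconvexity of the graph domains that arise — are routine.
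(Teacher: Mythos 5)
Your argument is correct in outline and reaches the same bound, but by a genuinely different route from the paper. The paper normalizes the quadric itself: after a perturbation it assumes the quadratic part nondegenerate, translates away the linear part, rotates to the diagonal form $P(\xx)=\sum_i c_ix_i^2-b$, splits $Z$ into signed-permutation images of a fundamental piece written explicitly as a graph, proves quasiconvexity of the graph domains over coordinate rectangles via the squaring map $\xx\mapsto(x_i^2)_i$, and then decomposes dyadically in $\min_i x_i$, so that the shell at scale $2^{-n}\rho$ lies near the union of the coordinate hyperplanes and absolute $\alpha$-decay supplies the gain $2^{-n\alpha}$ against the factor $(2^n\epsilon)^\beta$ from Lemma~\ref{lemma0}. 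You instead keep $P$ general (normalizing only its size on a fixed ball containing $\Supp\mu$) and decompose dyadically in $|\nabla P|$: the $\alpha$-gain comes from the observation that, once $\|A\|\asymp_\times1$, projecting onto a top eigenvector of $A$ shows $\{|\nabla P|\le t\}$ lies in an $O(t)$-slab about a single affine hyperplane, while on a shell $|\nabla P|\asymp_\times g$ the implicit function theorem at scale $\asymp_\times g$ gives graphs over convex discs with $\|f''\|\lesssim_\times 1/g$, so \eqref{betabound} supplies $(\epsilon\rho/g)^\beta$; the same balancing of $\alpha$ against $\beta$ and the same geometric series finish the proof. Your version buys a cleaner uniformity in $Z$ (all constants visibly depend only on $k$, $\mu$, $\alpha$, $\beta$ through the normalization), avoids the diagonalization and the bespoke quasiconvexity claim (convexity of discs suffices), and makes explicit the degenerate cases that the paper dispatches with a one-line perturbation remark; the cost is more implicit-function-theorem and covering bookkeeping, in particular you should spell out the small-$\|A\|$ case (if some point of $Z\cap B_0$ had small gradient with $\|A\|$ small, then $\|A\|$, $\sup_{B_0}|\nabla P|$ and $\sup_{B_0}|P|$ would all be small, contradicting the normalization, so $|\nabla P|\gtrsim_\times1$ on $Z\cap B_0$ and the curvature is bounded) and verify that each ball of radius $\asymp_\times g$ used on a shell sits inside the region where the graph representation with $\|f''\|\lesssim_\times1/g$ is valid, both of which are routine as you indicate.
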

We emphasize that the implied constant here must be independent of $Z$.
\begin{proof}
We assume that $k\geq 2$, as otherwise the conclusion is trivially true. Write $Z = Z_P$ for some quadratic polynomial $P:\R^k\to\R$. By a perturbation argument, we can without loss of generality assume that the quadratic part of $P$ is nondegenerate. Then by applying a translation, we can without loss of generality assume that $P$ has no linear part. Finally, by applying a rotation we can without loss of generality suppose that
\[
P(\xx) = \sum_{i = 1}^k c_i x_i^2 - b
\]
for some $c_1,\ldots,c_k \neq 0$ and $b\in\R$. This is because every symmetric matrix can be diagonalized by an orthogonal matrix.

Let
\begin{align*}
\w Z &= \{\xx\in Z\cap\Rplus^k : |c_i x_i| \leq |c_k x_k| \all i = 1,\ldots,k - 1\}\\
&= \{\xx\in Z\cap\Rplus^k : c_i^2 x_i^2 \leq c_k^2 x_k^2 \all i = 1,\ldots,k - 1\}.
\end{align*}
Then $Z = G(\w Z)$, where $G$ is the finite group of signed permutation matrices. So to complete the proof, it suffices to show that for every ball $B = B(\ww,\rho)\subset\R^k$ with $\ww\in\Supp(\mu)$ and $0 < \rho \leq 1$ and for every $\epsilon > 0$,
\begin{equation}
\label{ETS}
\mu(B\cap \thickvar{\w Z}{\epsilon\rho}) \lesssim_\times \epsilon^\beta \mu(B).
\end{equation}
Let $f:\R^{k - 1}\to\R$ be the unique positive solution to $P(\xx,f(\xx)) = 0$, i.e.
\[
f(\xx) = \sqrt{\frac{b}{c_k} - \sum_{i = 1}^{k - 1} \frac{c_i}{c_k} x_i^2},
\]
and let
\begin{align*}
S &= \left\{\xx\in\R^{k - 1} : (\xx,f(\xx)) \in \w Z\right\}
= \left\{\xx\in\Rplus^{k - 1} : c_k b - c_k \sum_{i = 1}^{k - 1} c_i x_i^2 \geq c_j^2 x_j^2 \all j = 1,\ldots,k - 1\right\}.
\end{align*}
Then $\w Z = \Gamma(S,f)$ in the notation of Lemma \ref{lemma0}. We compute $f'$ and $f''$, using the notation $x_k = f(\xx)$:
\begin{align*}
f'(\xx)_i &= -\frac{c_i x_i}{c_k x_k}\\
|f'(\xx)_i| &\leq 1 \;\; (\xx\in S)\\
f''(\xx)_{ij} &= \frac{c_i x_i}{c_k x_k^2} f'(\xx)_j - \frac{c_i \delta_{ij}}{c_k x_k}
= -\left[\frac{c_i x_i c_j x_j}{c_k^2 x_k^3} + \frac{c_i \delta_{ij}}{c_k x_k}\right]\\
|f''(\xx)_{ij}| &\leq \frac{1}{x_k} + \frac{1}{x_i} \leq \frac2{\min_{\ell = 1}^k x_\ell} \;\; (\xx\in S).
\end{align*}
Since our bound for $\|f''(\xx)\|$ depends on $\xx$, we need a stronger property than just the quasiconvexity of $S$:

\begin{claim}
If $R = \prod_{i = 1}^k [a_i,b_i] \subset\Rplus^k$ is a coordinate-parallel rectangle, then the set
\[
S_R = \{\xx\in\R^{k - 1} : (\xx,f(\xx)) \in \w Z\cap R\}
\]
is $\sqrt{k - 1}$-quasiconvex.
\end{claim}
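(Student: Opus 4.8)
The plan is to linearize the quadratic constraints defining $\w Z$ by passing to the squared coordinates $u_i := x_i^2$ $(1\le i\le k-1)$, in which $S_R$ turns out to be convex, and then to obtain the required path by transporting a straight segment back through the coordinatewise square‑root map, controlling its length by an elementary integral. Concretely, let $\sigma\colon\Rplus^{k-1}\to\Rplus^{k-1}$ be the coordinatewise square‑root map; it is a homeomorphism with inverse $\xx\mapsto(x_1^2,\dots,x_{k-1}^2)$. Put $g(\xx)=\frac b{c_k}-\sum_{i<k}\frac{c_i}{c_k}x_i^2$, so that $f(\xx)^2=g(\xx)$. Unwinding the definition, a point $\xx$ lies in $S_R$ precisely when: $x_i\ge 0$ and $a_i\le x_i\le b_i$ for each $i<k$; $c_i^2 x_i^2\le c_k^2\,g(\xx)$ for each $i<k$ (this is the condition $|c_i x_i|\le|c_k f(\xx)|$ for $(\xx,f(\xx))\in\w Z$, and it already forces $g(\xx)\ge 0$); and $a_k^2\le g(\xx)\le b_k^2$ (the condition $a_k\le f(\xx)\le b_k$ coming from the $k$-th factor of $R$). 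Every one of these inequalities becomes affine once written in the variables $u_i=x_i^2$, so the set $S_R^{(u)}:=\{(x_1^2,\dots,x_{k-1}^2):\xx\in S_R\}$ is the intersection of $\Rplus^{k-1}$ with finitely many closed affine half‑spaces; in particular $S_R^{(u)}$ is convex and $\sigma(S_R^{(u)})=S_R$.

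Now fix $\xx,\yy\in S_R$. Then $\sigma^{-1}(\xx),\sigma^{-1}(\yy)\in S_R^{(u)}$, hence the segment $\ell(t)=(1-t)\sigma^{-1}(\xx)+t\,\sigma^{-1}(\yy)$ lies in $S_R^{(u)}$ by convexity, and $\gamma:=\sigma\circ\ell$ is a path in $S_R$ joining $\xx$ to $\yy$ with $\gamma(t)_i=\sqrt{(1-t)x_i^2+t\,y_i^2}$. (After a smooth reparametrization that is flat at $t=0,1$, $\gamma$ is piecewise smooth; alternatively, only the rectifiability of $\gamma$ is used when Lemma \ref{lemma0} is applied.) Using $\dot\gamma(t)_i=\dfrac{y_i^2-x_i^2}{2\sqrt{(1-t)x_i^2+ty_i^2}}$, the bound $\|\cdot\|_2\le\|\cdot\|_1$, and the identity $\int_0^1\frac{dt}{\sqrt{(1-t)x_i^2+ty_i^2}}=\frac{2}{x_i+y_i}$ (valid since $x_i,y_i\ge 0$), one computes
\begin{align*}
\Length(\gamma) &\le \sum_{i<k}\int_0^1\frac{|y_i^2-x_i^2|}{2\sqrt{(1-t)x_i^2+ty_i^2}}\,dt
= \sum_{i<k}\frac{|y_i^2-x_i^2|}{x_i+y_i}\\
&= \sum_{i<k}|y_i-x_i| = \|\yy-\xx\|_1 \le \sqrt{k-1}\,\|\yy-\xx\|,
\end{align*}
which is exactly the assertion that $S_R$ is $\sqrt{k-1}$-quasiconvex.

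\emph{Where the difficulty lies.} The only substantive step is the first one: one must track every constraint implicit in the one‑line definition of $S_R$ — nonnegativity of the coordinates, membership in the domain of $f$ (that is, $g(\xx)\ge 0$), and the two‑sided bound on $f(\xx)$ contributed by the $k$-th factor of $R$ — and check that the entire list becomes affine under $u_i=x_i^2$. Once this is in place the convexity of $S_R^{(u)}$ is immediate, and the length estimate is the routine computation displayed above, with the key point that the resulting constant $\sqrt{k-1}$ comes only from $\|\cdot\|_1\le\sqrt{k-1}\,\|\cdot\|_2$ and is therefore independent of $R$. The failure of $\gamma$ to be smooth at an endpoint where some $x_i$ or $y_i$ vanishes is a cosmetic issue, handled by the reparametrization remark above.
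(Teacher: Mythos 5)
Your proposal is correct and takes essentially the same route as the paper: both pass to the squared coordinates $u_i = x_i^2$, observe that all constraints defining $S_R$ become affine (so the image is convex), pull the straight segment back through the coordinatewise square root, and bound the length coordinatewise to get the constant $\sqrt{k-1}$ from $\|\cdot\|_1 \le \sqrt{k-1}\,\|\cdot\|_2$. The only cosmetic difference is that you evaluate the length integral explicitly, while the paper uses the monotonicity of each coordinate $t \mapsto x_i^{(t)}$ to reach the same bound.
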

\begin{subproof}
Let $\phi:\Rplus^{k - 1}\to\Rplus^{k - 1}$ be defined by the equation
\[
\phi(\xx) = (x_1^2,\ldots,x_{k - 1}^2).
\]
Then $S_R = \phi^{-1}(T_R)$, where
\begin{align*}
T_R &= \left\{\yy\in \prod_{i = 1}^{k - 1} [a_i^2,b_i^2] : c_k b - c_k \sum_{i = 1}^{k - 1} c_i y_i \geq c_j^2 y_j \all j = 1,\ldots,k - 1, \;\; \frac{b}{c_k} - \sum_{i = 1}^{k - 1} \frac{c_i}{c_k} y_i \in [a_k^2,b_k^2]\right\}.
\end{align*}
Fix $\xx^{(0)},\xx^{(1)}\in S_R$, and let $\yy^{(0)} = \phi(\xx^{(0)})$, $\yy^{(1)} = \phi(\xx^{(1)})$. Since $T_R$ is convex, for each $t\in[0,1]$
\[
\yy^{(t)} := \yy^{(0)} + t(\yy^{(1)} - \yy^{(0)}) \in T_R
\]
and thus
\[
\xx^{(t)} := \phi^{-1}(\yy^{(t)}) \in S_R.
\]
Let $\gamma$ be the path $t\mapsto \xx^{(t)}$. Since for each $i = 1,\ldots,k - 1$ the map $t\mapsto x^{(t)}_i$ is monotonic, we have
\[
\Length(\gamma) = \int_0^1 \left\|\frac{\del\xx^{(t)}}{\del t}\right\| \; \dee t \leq \sum_{i = 1}^{k - 1} \int_0^1 \left|\frac{\del x^{(t)}_i}{\del t}\right| \; \dee t = \sum_{i = 1}^{k - 1} |x^{(1)}_i - x^{(0)}_i| \leq \sqrt{k - 1} \|\xx^{(1)} - \xx^{(0)}\|,
\]
which completes the proof.
\end{subproof}

Now let $B = B(\ww,\rho)$ be a ball with $\ww\in\Supp(\mu)$ and $0 < \rho\leq 1$, and fix $\epsilon > 0$. Fix $n\in\N$ and let
\[
\rho_n = 2^{-n} \rho, \;\; R_n = \left\{\xx\in \Rplus^k : \rho_{n + 1} \leq \min_{i = 1}^k x_i \leq \rho_n\right\}, \;\; S_n = S_{R_n}.
\]
Since $R_n$ can be written as the union of $k$ different coordinate-parallel rectangles, $S_n$ can be written as the union of $k$ different $\sqrt{k - 1}$-quasiconvex sets. Thus since
\[
\|f''\|_{S_n} \lesssim_\times 1/\rho_n,
\]
the hypotheses of Lemma \ref{lemma0} are satisfied for balls of radius $\rho_n$. Now let $A$ be a maximal $\rho_n$-separated subset of $B(\ww,\rho)\cap R_n\cap\Supp(\mu)$, so that
\begin{align*}
\mu\big(B\cap \thickvar{\Gamma(S_n,f)}{\epsilon\rho}\big)
&\leq_\pt \sum_{\xx\in A} \mu\big(B(\xx,\rho_n)\cap \thickvar{\Gamma(S_n,f)}{\epsilon\rho}\big)\\
&\lesssim_\times \sum_{\xx\in A} (2^n\epsilon)^\beta \mu \big(B(\xx,\rho_n)\big) \note{Lemma \ref{lemma0}}\\
&\lesssim_\times (2^n\epsilon)^\beta \mu\big(\thickvar{B\cap R_n}{\rho_n}\big). \note{bounded multiplicity}
\end{align*}
Let $V$ denote the union of the coordinate hyperplanes in $\R^k$, so that $R_n \subset \thickvar V{\rho_n}$. Then
\begin{align*}
\mu\big(B\cap \thickvar{\Gamma(S_n,f)}{\epsilon\rho}\big)
&\leq_\pt (2^n\epsilon)^\beta \mu\big(2B\cap \thickvar V{2\rho_n}\big)\\
&\lesssim_\times (2^n\epsilon)^\beta (2^{-n})^\alpha \mu(2B) \note{$\alpha$-decaying property}\\
&\asymp_\times 2^{-n(\alpha - \beta)} \epsilon^\beta \mu(B). \note{doubling property}\\
\mu(B\cap\thickvar{\Gamma(S,f)}{\epsilon\rho})
&\leq_\pt \sum_{n = 0}^\infty \mu\big(B\cap \thickvar{\Gamma(S_n,f)}{\epsilon\rho}\big)\\
&\lesssim_\times \epsilon^\beta \mu(B) \sum_{n = 0}^\infty 2^{-n(\alpha - \beta)} \asymp_\times \epsilon^\beta \mu(B),
\end{align*}
demonstrating \eqref{ETS}.
\end{proof}

Recall that $d = k + 1$.

\begin{corollary}
\label{corollaryquadraticneighborhood}
Let $U\subset\R^k$ be an open set containing $\Supp(\mu)$, let $\Psi:U\to Z_\reg$ be a local coordinate chart on a quadratic hypersurface $Z\subset\R^d$, and let $\nu = \Psi[\mu]$. Then for every affine hyperplane $\LL$, for every ball $B = B(\xx,r) \subset \R^d$, and for every $\epsilon > 0$,
\[
\nu\big(B\cap \thickvar{\LL\cap Z}{\epsilon\rho}\big) \lesssim_\times \epsilon^\beta \nu(B).
\]
\end{corollary}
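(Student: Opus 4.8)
The strategy is to reduce the estimate to Lemma~\ref{lemma1} by replacing the given chart $\Psi$ with a graph chart of $Z$, under which $\LL\cap Z$ pulls back into a quadric hypersurface of $\R^k$. We may assume $\xx\in\Supp(\nu)$ and that $\rho$ and $\epsilon$ are as small as we please: if $B\cap\Supp(\nu)=\emptyset$ the statement is trivial, and the ranges of $\rho,\epsilon$ bounded away from $0$ are handled by the usual compactness arguments, using that $\nu$ is a finite measure with compact support and that $\nu$ is Federer (the latter because $\Psi$, a diffeomorphism onto an open subset of $Z_\reg$, is bi-Lipschitz near the compact set $\Supp(\mu)$, and doubling passes through bi-Lipschitz maps). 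Since $\Psi(\Supp(\mu))$ is a compact subset of $Z_\reg$, there is $\rho_0>0$ so that for every $\xx\in\Psi(\Supp(\mu))$ and $\rho\le\rho_0$ the set $B(\xx,2\rho)\cap Z$ lies in the image of one of finitely many graph coordinate charts $\Psi_1\colon U_1\to Z$; after a rotation of $\R^d$ (which transforms $Z$, $\LL$, and $\nu$ compatibly) we may write $\Psi_1(\tt)=(\tt,g(\tt))$ with $g$ smooth and $P(\tt,g(\tt))\equiv0$ on $U_1$.

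The key point is that $\Psi_1^{-1}(\LL\cap Z)$ is contained in a quadric of $\R^k$. Write $\LL=\{\yy\in\R^d:\lb\yy,\ww\rb=c\}$ with $\ww=(w_1,\dots,w_d)$, $\|\ww\|=1$. If $w_d=0$, then $\Psi_1^{-1}(\LL)=\{\tt:\sum_{i<d}w_it_i=c\}$ is an affine hyperplane of $\R^k$. If $w_d\ne0$, then $\tt\in\Psi_1^{-1}(\LL)$ forces $g(\tt)=\ell(\tt)$, where $\ell(\tt):=w_d^{-1}\big(c-\sum_{i<d}w_it_i\big)$ is affine, whence $P(\tt,\ell(\tt))=P(\tt,g(\tt))=0$; since $P$ is quadratic and $\ell$ affine, $P'(\tt):=P(\tt,\ell(\tt))$ is a polynomial of degree at most $2$. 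In either case $\Psi_1^{-1}(\LL\cap Z)\subseteq\Psi_1^{-1}(\LL)\subseteq Z':=Z_{P'}$, and (barring the degenerate case $P'\equiv0$, where $\LL$ contains a relatively open piece of $Z$, which we do not treat) $Z'$ is a quadric hypersurface of $\R^k$ to which Lemma~\ref{lemma1} applies. The quadric $Z'$ depends on $\LL$ and on the chart, but the degree bound, hence the applicability of Lemma~\ref{lemma1}, is uniform.

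Now fix $B=B(\xx,\rho)$ with $\xx\in\Supp(\nu)$, $\rho\le\rho_0$, and the associated chart $\Psi_1$, and set $\mu_1:=\Psi_1^{-1}[\nu|_W]$ for a small open $W\supseteq B(\xx,2\rho)$ with $Z\cap W\subseteq\Psi_1(U_1)$. The map $\Phi:=\Psi_1^{-1}\circ\Psi$ is a nonsingular transformation between open subsets of $\R^k$, defined on a neighborhood of $\tt_0:=\Psi^{-1}(\xx)$, with $\mu_1=\Phi[\mu]$ there; hence, running the argument of Proposition~\ref{propositionpreserved} locally around $\tt_0$, $\mu_1$ is absolutely $\beta'$-friendly for every $\beta'<\alpha$, and we fix $\beta<\beta'<\alpha$. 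Using that $\Psi_1,\Psi_1^{-1}$ are bi-Lipschitz, one checks (for $\rho$ small) that with $\tt_0':=\Psi_1^{-1}(\xx)\in\Supp(\mu_1)$ and a uniform constant $C\ge1$,
\[
\Psi_1^{-1}\big(B\cap\thickvar{\LL\cap Z}{\epsilon\rho}\big)\ \subseteq\ B(\tt_0',C\rho)\cap\thickvar{Z'}{\epsilon C\rho},\qquad \Psi_1\big(B(\tt_0',C\rho)\big)\ \subseteq\ B(\xx,C^2\rho).
\]
Applying Lemma~\ref{lemma1} to $\mu_1$ and $Z'$ (with $\beta'$ in the role of $\alpha$, and $C\rho\le1$ in the role of the radius) and then the Federer property of $\nu$ yields
\begin{align*}
\nu\big(B\cap\thickvar{\LL\cap Z}{\epsilon\rho}\big)
&=\mu_1\Big(\Psi_1^{-1}\big(B\cap\thickvar{\LL\cap Z}{\epsilon\rho}\big)\Big)
\leq\mu_1\big(B(\tt_0',C\rho)\cap\thickvar{Z'}{\epsilon C\rho}\big)\\
&\lesssim_\times\epsilon^\beta\,\mu_1\big(B(\tt_0',C\rho)\big)
\leq\epsilon^\beta\,\nu\big(B(\xx,C^2\rho)\big)
\lesssim_\times\epsilon^\beta\,\nu(B),
\end{align*}
which is the assertion.

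The two substantive ingredients — that absolute friendliness survives a change of chart, and that an absolutely friendly measure decays near a quadric in $\R^k$ — are precisely Proposition~\ref{propositionpreserved} and Lemma~\ref{lemma1}, so the only new content above is the elementary graph-chart computation placing $\Psi_1^{-1}(\LL\cap Z)$ inside a quadric. Accordingly I expect the main difficulty to be bookkeeping rather than a genuine obstacle: reducing to a single graph chart, keeping the identity $\mu_1=\Phi[\mu]$ valid on a fixed neighborhood so that Proposition~\ref{propositionpreserved} can be invoked locally, and translating balls and thickenings back and forth between $\R^d$ and $\R^k$ at the cost of constants that the Federer property of $\nu$ then absorbs.
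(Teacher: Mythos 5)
Your proposal is correct and takes essentially the same route as the paper: reduce via Proposition \ref{propositionpreserved} to a chart whose inverse is linear (your graph charts are a special case), observe that under such a chart $\LL\cap Z$ pulls back into a quadratic hypersurface or an affine hyperplane of $\R^k$, apply Lemma \ref{lemma1} (or absolute decay), and transfer the estimate back using bi-Lipschitzness and the Federer property. The degenerate case you flag ($P'\equiv 0$, i.e.\ $\LL$ locally contained in $Z$) is likewise passed over silently in the paper's proof, so it is not a point of divergence between the two arguments.
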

In this corollary, the implied constant can depend on $Z$.
\begin{proof}
By Proposition \ref{propositionpreserved}, it suffices to show that there exists a covering of $Z_\reg$ by coordinate charts with the required property. We show that coordinate charts whose inverses are linear have the property. Indeed, suppose that $L\circ\Psi(\xx) = \xx$, where $L:\R^d\to\R^k$ is some linear map. Let $\LL\subset\R^d$ be an affine hyperplane. Then $\LL\cap Z$ is a quadratic hypersurface in $\LL$, so $L[\LL\cap Z]$ is a quadratic hypersurface in $\R^k$, unless $L\given\LL$ is singular in which case $L[\LL]$ is a hyperplane in $\R^k$. Either way, the $\mu$-measure of neighborhoods of $\Psi^{-1}(\LL)\subset L[\LL\cap Z]$ can be bounded using Lemma \ref{lemma1}. Since $\Psi$ is bi-Lipschitz, this completes the proof.
\end{proof}

We record the following corollary of Lemma \ref{lemmasimplex} for use in the proof below:

\begin{corollary}
\label{corollarysimplex}
Let $Z\subset\R^d$ be a hypersurface, and let $K\subset Z_\reg$ be a compact set. Then there exists $\kappa > 0$ such that for every ball $B = B(\xx,\rho)\subset\R^d$, the set
\[
\left\{\frac\pp q\in \Q^d\cap K\cap B : q \leq \kappa/\rho\right\}
\]
is contained in an affine hyperplane $\LL = \LL_{B,Z}$.
\end{corollary}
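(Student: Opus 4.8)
The plan is to derive Corollary~\ref{corollarysimplex} from Lemma~\ref{lemmasimplex} by a compactness argument, reducing the global statement about arbitrary balls $B$ to finitely many applications of the local simplex lemma on charts. First I would cover the compact set $K\subset Z_\reg$ by finitely many open sets $W_1,\ldots,W_m$, each of which is the image $\Psi_j(U_j)$ of a local parameterization $\Psi_j:U_j\to Z_\reg$ with $U_j\subset\R^{d-1}$ open, and then shrink to compact sets $V_j\subset U_j$ whose images $\Psi_j(V_j)$ still cover $K$. Applying Lemma~\ref{lemmasimplex} with $k=d-1$ (so that $c(k,d)=c(d-1,d)=1$) to each pair $(U_j,\Psi_j,V_j)$ yields constants $\kappa_j>0$ such that for every ball $B'=B(\yy,\rho)\subset V_j$, the rational points $\frac\pp q\in\Q^d\cap\Psi_j(B')$ with $q\le\kappa_j\rho^{-1}$ lie on a common affine hyperplane.

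The main obstacle is bookkeeping with the two different ball conditions: Lemma~\ref{lemmasimplex} controls $\Psi_j(B')$ for balls $B'$ in the parameter space $\R^{d-1}$, whereas Corollary~\ref{corollarysimplex} speaks of balls $B$ in the ambient space $\R^d$. To bridge this, I would use that each $\Psi_j$ restricted to $V_j$ is bi-Lipschitz (with constants uniform over the finite cover), so that there is a constant $L\ge 1$ with $\Psi_j(B(\yy,\rho/L))\supseteq \Psi_j(V_j)\cap B(\Psi_j(\yy),\rho)$ for all relevant $\yy,\rho$; equivalently, an ambient ball $B=B(\xx,\rho)$ of small enough radius meets $\Psi_j(V_j)$ inside $\Psi_j(B')$ for a parameter ball $B'$ of radius comparable to $\rho$. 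The subtlety is that a single ambient ball $B$ may meet several of the charts $\Psi_j(V_j)$, so the rational points in $\Q^d\cap K\cap B$ need not all come from one chart a priori.

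To handle this I would choose the radius scale carefully. Let $r_0>0$ be a Lebesgue number for the cover $\{\Psi_j(V_j)^\circ\}$ of $K$ (in the subspace topology of $Z$), together with the bi-Lipschitz constants, so that any ambient ball $B=B(\xx,\rho)$ with $\rho\le r_0$ and $B\cap K\neq\emptyset$ has $B\cap K$ contained in a single chart image $\Psi_j(V_j)$ and moreover $B\cap K\subset\Psi_j(B')$ for a parameter ball $B'$ of radius $\le L\rho$. Then set $\kappa:=\min_j \kappa_j/L$. For $\rho\le r_0$, every $\frac\pp q\in\Q^d\cap K\cap B$ with $q\le\kappa/\rho$ satisfies $q\le\kappa_j(L\rho)^{-1}$ and lies in $\Psi_j(B')$, so Lemma~\ref{lemmasimplex} puts all such points on one affine hyperplane $\LL=\LL_{B,Z}$. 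For $\rho>r_0$ the statement is vacuous after possibly shrinking $\kappa$: the condition $q\le\kappa/\rho<\kappa/r_0$ forces $q$ to range over a bounded set, but one can simply reduce $\kappa$ so that $\kappa/\rho<1$ whenever $\rho>r_0$, making the set of admissible $\frac\pp q$ empty (as $q\ge 1$). Collecting the finitely many choices, $\kappa=\min\bigl(\min_j\kappa_j/L,\ r_0\bigr)$ works for all balls $B\subset\R^d$, which completes the proof. The only genuinely nontrivial input is Lemma~\ref{lemmasimplex} itself; everything here is a uniformization over a finite cover plus the observation that $c(d-1,d)=1$ makes the exponent on $\rho$ exactly $1$ as required.
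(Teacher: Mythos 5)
Your proposal is correct and matches what the paper intends: Corollary~\ref{corollarysimplex} is recorded without proof as an immediate consequence of Lemma~\ref{lemmasimplex}, and the implicit argument is precisely your finite-chart compactness uniformization using $c(d-1,d)=1$, uniform bi-Lipschitz constants, a Lebesgue number, and shrinking $\kappa$ to make the large-$\rho$ case vacuous. The only detail to make explicit is that the parameter ball $B'$ of radius comparable to $\rho$ must itself lie in the compact set to which Lemma~\ref{lemmasimplex} is applied; this is guaranteed by applying the lemma to slightly enlarged compact sets $V_j'\subset U_j$ and taking $r_0$ (hence $\kappa$) small enough, which fits within the choices you already make.
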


\subsection{Finishing the proof using the method of \cite{PollingtonVelani}}
As in Theorem \ref{maintheorem}, let $\mu$ be a measure on an open set $U\subset\R^k$ which is absolutely $\alpha$-friendly and Ahlfors $\delta$-regular, let $\Psi:U\to Z_\reg$ be a local coordinate chart on a quadratic hypersurface $Z\subset\R^d$, let $\nu = \Psi[\mu]$, and fix $c\geq 1$. Let $K\subset Z_\reg$ be a compact neighborhood of $\Supp(\nu)$. Then we can apply both Corollary \ref{corollaryquadraticneighborhood} and Corollary \ref{corollarysimplex}.

We wish to show that the dimension bound \eqref{main} holds. To this end, after fixing $s > \delta - \alpha\left(1 - \frac1c\right)$, we must construct a cover $\CC$ of $E := W_{c,Z}\cap\Supp(\nu)$ satisfying the hypotheses of Lemma \ref{HClemma}. We will construct this cover as the union of several smaller collections of sets.

For each $n\in\N$, let $\rho_n := 2^{-n}$, let $S_n \subseteq \Supp(\nu)$ be a maximal $\rho_n$-separated set, and let $\SS_n := \{B(\xx,\rho_n) : \xx \in S_n\}$. For each $B = B(\xx, \rho_n) \in \SS_n$, let $\LL_\xx = \LL_{2B}$ be as in Corollary \ref{corollarysimplex}, let $T_\xx \subseteq B\cap \thickvar{\LL_\xx\cap Z}{\rho_n^c}\cap \Supp(\nu)$ be a maximal $\rho_n^c$-separated set, and let $\TT_\xx = \{B(\yy, \rho_n^c): \yy \in T_B\}$. Then let
\[
\CC_n := \displaystyle\bigcup_{\xx \in S_n} \TT_\xx
\]
and
\[
\CC := \bigcup_{n\in\N} \CC_n.
\]
We claim that this collection satisfies the hypotheses of Lemma \ref{HClemma}. First, fix $\zz \in E = W_{c,Z} \cap \Supp(\nu)$. Choose $c'\in (c,\omega_Z(\zz))$, and fix $\frac{\pp}{q} \in \Q^d \cap Z$ such that $\left\| \zz - \frac{\pp}{q}\right\| \leq q^{-c'}$. Let $n$ be minimal such that $q \leq \kappa \rho_n^{-1}$. Let $\xx \in S_n$ satisfy $\zz \in B = B(\xx,\rho_n)$ (such an $\xx$ must exist since $S_n$ is maximal).

If $q$ is sufficiently large, then $\left\| \zz - \frac{\pp}{q}\right\| \leq q^{-c'} \leq \rho_n$, so $\left\| \xx - \frac{\pp}{q}\right\| \leq 2\rho_n$. By Corollary \ref{corollarysimplex}, $\frac{\pp}{q} \in \LL_\xx$. On the other hand, the minimality of $n$ implies that $q\geq \kappa \rho_{n + 1}^{-1}$ and thus if $n$ is sufficiently large, then $q^{-c'} \leq (2/\kappa)^{c'} \rho_n^{c'} \leq \rho_n^c$. So
\[
\zz \in B\cap \thickvar{\LL_\xx \cap Z}{q^{-c'}} \subseteq B\cap \thickvar{\LL_\xx\cap Z}{\rho_n^c} \subseteq \bigcup\big(\TT_\xx\big) = \CC_n.
\]
It follows that $\zz$ lies in infinitely many $\CC_n$, namely one for each approximant $\frac{\pp}{q}$.

On the other hand, for all $\beta < \alpha$
\begin{align*}
\operatorname{cost}_s(\CC) &\leq_\pt \sum_{n\in\N} \sum_{\xx \in S_n} \sum_{\yy \in T_\xx} (2\rho_n^c)^s\\
&\asymp_\times \sum_n \sum_{\xx} \sum_\yy \nu\big(B(\yy,\rho_n^c)\big) (\rho_n^c)^{(s-\delta)} \note{Ahlfors $\delta$-regularity}\\
&\asymp_\times \sum_n \rho_n^{c(s-\delta)} \sum_\xx \nu\big(B(\xx,2\rho_n)\cap \thickvar{\LL_\xx \cap Z}{2\rho_n^c}\big) \note{bounded multiplicity}\\
&\lesssim_\times \sum_n \rho_n^{c(s-\delta)} \sum_\xx (\rho_n^{c-1})^\beta \nu\big( B(\xx,2\rho_n)\big) \note{Corollary \ref{corollaryquadraticneighborhood}}\\
&\asymp_\times \sum_n (\rho_n)^{c(s-\delta) + \beta(c-1)} \nu(Z) \note{bounded multiplicity}\\
&<_\pt \infty,
\end{align*}
where the last inequality holds assuming $c(s-\delta) + \beta(c-1) > 0$, i.e.
\begin{equation}
\label{sbetacomparison}
s > \delta - \beta\left(1 - \frac{1}{c}\right).
\end{equation}
For all $s > \delta - \alpha\left(1 - \frac1c\right)$, there exists $\beta < \alpha$ such that \eqref{sbetacomparison} holds, and thus $\HD(E)\leq s$. This completes the proof of \eqref{main}.

%

\bibliographystyle{amsplain}

\bibliography{bibliography}

\providecommand{\bysame}{\leavevmode\hbox to3em{\hrulefill}\thinspace}
\providecommand{\MR}{\relax\ifhmode\unskip\space\fi MR }
\providecommand{\MRhref}[2]{%
  \href{http://www.ams.org/mathscinet-getitem?mr=#1}{#2}
}
\providecommand{\href}[2]{#2}
\begin{thebibliography}{10}

\bibitem{Beresnevich_Khinchin}
Victor Beresnevich, \emph{Rational points near manifolds and metric
  {D}iophantine approximation}, Ann. of Math. (2) \textbf{175} (2012), no. 1,
  187--235.

\bibitem{Beresnevich_BA}
\bysame, \emph{Badly approximable points on manifolds}, Invent. Math.
  \textbf{202} (2015), no.~3, 1199--1240. \MR{3425389}

\bibitem{BernikDodson}
Vasilii Bernik and Maurice Dodson, \emph{Metric {D}iophantine approximation on
  manifolds}, Cambridge Tracts in Mathematics, vol. 137, Cambridge University
  Press, Cambridge, 1999.

\bibitem{Falconer_book}
Kenneth Falconer, \emph{Fractal geometry: {M}athematical foundations and
  applications}, John Wiley \& Sons, Ltd., Chichester, 1990.

\bibitem{FKMS}
Lior Fishman, Dmitry Kleinbock, Keith Merrill, and David Simmons,
  \emph{Intrinsic {D}iophantine approximation on manifolds},
  \url{http://arxiv.org/abs/1405.7650v2}, preprint 2014.

\bibitem{FSU4}
Lior Fishman, David Simmons, and Mariusz Urba{\'n}ski, \emph{{D}iophantine
  approximation and the geometry of limit sets in {G}romov hyperbolic metric
  spaces}, \url{http://arxiv.org/abs/1301.5630}, preprint 2013, to appear in
  Mem. Amer. Math. Soc.

\bibitem{Hutchinson}
John Hutchinson, \emph{Fractals and self-similarity}, Indiana Univ. Math. J.
  \textbf{30} (1981), no. 5, 713--747.

\bibitem{KLW}
Dmitry Kleinbock, Elon Lindenstrauss, and Barak Weiss, \emph{On fractal
  measures and {D}iophantine approximation}, Selecta Math. \textbf{10} (2004),
  479--523.

\bibitem{KleinbockMargulis2}
Dmitry Kleinbock and Gregory Margulis, \emph{Flows on homogeneous spaces and
  {D}iophantine approximation on manifolds}, Ann. of Math. (2) \textbf{148}
  (1998), no. 1, 339--360.

\bibitem{KleinbockWeiss1}
Dmitry Kleinbock and Barak Weiss, \emph{Badly approximable vectors on
  fractals}, Israel J. Math. \textbf{149} (2005), 137--170.

\bibitem{KTV}
Simon Kristensen, Rebecca Thorn, and Sanju Velani, \emph{{D}iophantine
  approximation and badly approximable sets}, Advances in Math. \textbf{203}
  (2006), 132--169.

\bibitem{PollingtonVelani}
Andrew Pollington and Sanju Velani, \emph{Metric {D}iophantine approximation
  and ``absolutely friendly'' measures}, Selecta Math. \textbf{11} (2005),
  297--307.

\bibitem{StratmannUrbanski1}
Bernd Stratmann and Mariusz Urba{\'n}ski, \emph{{D}iophantine extremality of
  the {P}atterson measure}, Math. Proc. Cambridge Philos. Soc. \textbf{140}
  (2006), 297--304.

\bibitem{Urbanski}
Mariusz Urba{\'n}ski, \emph{{D}iophantine approximation and self-conformal
  measures}, J. Number Theory \textbf{110} (2005), 219--235.

\end{thebibliography}

\end{document}